\newtheorem{theorem}{Theorem}[section]
\newtheorem{lemma}[theorem]{Lemma}
\newtheorem{corollary}[theorem]{Corollary}
\newtheorem{proposition}[theorem]{Proposition}
\theoremstyle{definition}
\newtheorem{definition}[theorem]{Definition}
\theoremstyle{remark}
\newtheorem{remark}[theorem]{Remark}
\numberwithin{equation}{section}
\newcommand{\on}[1]{\operatorname{#1}}
\newcommand{\abs}[1]{\lvert#1\rvert}
\newcommand{\un}[1]{\underline{#1}}
\newcommand{\xg}{\backslash} 
\newcommand{\lieg}{\mathfrak{g}} 
\newcommand{\lieh}{\mathfrak{h}} 
\newcommand{\liet}{\mathfrak{t}} 
\newcommand{\liel}{\mathfrak{l}} 
\newcommand{\lieb}{\mathfrak{b}}
\newcommand{\p}{\partial} 
\newcommand{\co}[1]{\underline{#1}} 
\newcommand{\ds}{\oplus} 
\newcommand{\bds}{\bigoplus} 
\newcommand{\fz}{\mathbb{Z}} 
\newcommand{\fn}{\mathbb{N}} 
\newcommand{\fk}{\mathbb{K}} 
\newcommand{\ff}{\mathbb{F}} 
\newcommand{\bb}[1]{\mathbb{#1}} 
\newcommand{\ra}{\rightarrow} 
\newcommand{\sur}{\twoheadrightarrow} 
\newcommand{\mt}{\mapsto} 
\newcommand{\iso}{\simeq} 
\newcommand{\pr}[1]{#1^{\prime}}
\begin{document}

\title[Borel subalgebras]{Borel subalgebras of Cartan Type Lie Algebras}



\author{Ke Ou}
\address{Department of Statistics and Mathematics, Yunnan University of Finance and Economics,
	Kunming, 650221, China.}
\email{justinok1311@hotmail.com}


\author{Bin Shu}
\address{Department of Mathematics, East China Normal University,
	Shanghai, 200241, China.}
\email{bshu@math.ecnu.edu.cn}




\subjclass[2000]{ 17B30;  17B50; 17B70}
\keywords{Borel subalgebra, completely solvable subalgebra, Jacobson-Witt algebra, Cartan type Lie algebra.}

\begin{abstract}
Let $ W(n) $ be Jacobson-Witt algebra over algebraic closed field $ \fk $ with positive characteristic $ p>2. $ It is difficult to classify all Borel subalgebras of $ W(n) $ or non-classical restricted simple Lie algebras. The present paper and \cite{S7} study two kinds of subalgebras which are easily to understand and highly related to Borel subalgebras. 

In \cite{S7}, the last author investigates a class of special Borel subalgebras of $W(n)$ which is called homogeneous Borel subalgebras. The present paper focuses on subalgebras of $ W(n) $ which are related to Borel subalgebras such that firstly, they could be trigonalizable; and secondly, they essentially belong to the ones investigated in \cite{S7}. In this paper, the conjugation classes of these subalgebras and representative for each class will be determined. Then some properties such as filtration and dimension will be investigated.
\end{abstract}

\maketitle
\setcounter{tocdepth}{1}

\section{Introduction}
If $ A $ is a non-associative algebra of finite dimension over a (commutative) field, its maximal solvable subalgebras will be called the {\em Borel subalgebras of A}. In this paper, we study the Borel subalgebras of non-classical simple Lie algebras in positive characteristic with certain conditions.

Over last few decades, Borel subalgebras has been investigated and generalised by many authors.
In \cite{Gr}, J.Green investigates so-called Borel
subalgebras of the Schur algebra associated. In \cite{DR}, J.Du and H.Rui investigate the existence of the Borel type subalgebras of a $ q \on{-Schur} ^m $ algebra. In \cite[Appendix]{Ko1}, the notion of Borel subalgebras for a quasi-hereditary algebra has been introduced by Scott. Then in \cite{Ko1,Ko2,Ko3}, S.Konig introduces and investigates the exact Borel subalgerbas and strong exact Borel subalgebras for an quasi-hereditary algebra. In \cite{DP}, I. Dimitrov and I.Penkov study the Borel subalgebras of the Lie algebra $ \frak{gl}(\infty) $ of finitary infinite matrices.

As we known, Borel subalgebras of a Lie algebra play an important role in the structure and representation theory. However, there is less study
on them for non-classical restricted simple Lie algebras. We neither know the number of conjugacy classes of Borel subalgebras nor what kind of
role the Borel subalgebras play in the representation theory 
although their Cartan subalgebras are well-known (cf. \cite{St2}).

It seems very difficult to classify all Borel subalgebras of non-classical restricted simple Lie algebras. Therefore,  \cite{S7} and this paper study two kinds of subalgebras which are easier to understand and highly related to Borel subalgebras.
In \cite{S7}, the last author investigates a class of special Borel subalgebras, the so-called homogeneous Borel subalgebras. This paper is a continuous of \cite{S7} to focus on completely solvable subalgebras of non-classical restricted simple Lie algebras.

Among non-classical simple Lie algebras, the Jacobson-Witt algebras $W(n)$ are primary, which are also the main objects in present paper. Since 1960's, they have been extensively studied (cf.  \cite{De1, LN2, LN3, S5, SY2} etc.).

The theorem of Borel(\cite{Bo}) and Morozov(\cite{Mo}) asserts that the Borel subalgebras of a semisimple complex Lie algebra $ \lieg $ are conjugate with respect to automorphism group. The same result is also true for a classical simple Lie algebra over an algebraically
closed field of prime characteristic with some mild restriction on the characteristic (\cite{Hum}), as well as alternative and Jordan algebras(\cite{Pe}).

However, this conjugation phenomenon will fail for Borel subalgebras of  non-classical Lie algebras over positive characteristic field.  There are $ (n+1) $ conjugacy classes of homogeneous Borel subalgebras of  $W(n)$ (cf. \cite{S7}). As a corollary, there are at least $ (n+1) $ conjugation classes of Borel subalgebras of $ W(n). $

Based on \cite{S7}, the present paper focuses on a special class of completely solvable subalgebras such that firstly, they could be trigonalizable; and secondly, they essentially belong to the ones investigated in \cite{S7}. In this paper, the conjugation classes of these subalgebras and representatives for each class will be determined. Then some properties such as filtration and dimension will be investigated.

We collect such subalgebras, endow the set of them with a variety structure and establish an analogy of classical Springer theory for Cartan type Lie algebras in other papers.

\section{Preliminaries}


Entire the whole paper, denote $ I=\{ 0,1,\cdots,p-1 \} $ and we always assume the ground filed $\mathbb{K}$ to be algebraically closed of odd characteristic $p>0.$ Unless mentioned otherwise, all vector spaces
are assumed to be finite-dimensional . Given a restricted Lie algebra $ (\lieg, [p]), $ we
have an adjoint group $ G := \on{Aut}_p(\lieg)^{\circ}, $ the identity component of its restricted automorphism group. The term {\em filtration} stands a descending filtration.

\subsection{Graded dimension of a graded algebra}
\begin{definition}\label{grading assumption}
	 For a given graded algebra $ (\liel,(\liel_{n})), $ set $ G:=\text{Aut}_p(\liel)^{\circ}. $ We call $ (\liel,(\liel_n), G, G_0, U ) $ satisfies {\em graded assumption} if $ G=G_0 \ltimes U $ where $ G_0 $ consists of homogeneous automorphisms and $ u\cdot x -x\in \liel_{\geq i+1} $ for all $ u\in U, x\in \liel_{i}.$
\end{definition}

\begin{lemma}\label{filtration}
	Let $ (\liel,(\liel_n)_{n\in\fz}) $ be a graded Lie algebra such that $ (\liel,(\liel_n),G,G_0,U) $ satisfying graded assumption, and $\lieb$ be a graded subalgebra of $ \liel. $ Then every subalgebra conjugates to $\lieb$ is filtered, namely $ C:=g\cdot\lieb$ admits a filtration structure $ (C_{(n)})_ {(n\in\fz)} $ for all $ g\in G. $ Moreover, $ (C_{(n)}) $ is restricted if all $ \liel, $ $ (\liel_{(n)}) $ and $ \lieb $ are restricted and $ g $ is a restricted automorphism.
\end{lemma}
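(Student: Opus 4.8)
The plan is to build the filtration on $C = g\cdot\lieb$ by transporting the standard filtration of $\liel$ induced by its grading, and then to use the graded assumption to show that this filtration is preserved well enough by the homogeneous part of $g$. First I would recall from Proposition~\ref{filter} (the graded-to-filtered construction) that the grading $(\liel_n)$ yields a filtration $\liel_{(n)} = \sum_{i\ge n}\liel_i$ of $\liel$, which is moreover restricted when $\liel$ and its $p$-map are. The natural candidate is then $C_{(n)} := C \cap \liel_{(n)}$. One must check that this is a genuine (descending, exhaustive, separated, bracket-compatible) filtration of $C$ as a Lie algebra: descending and bracket-compatibility are immediate from the corresponding properties of $\liel_{(n)}$ together with $[C,C]\subseteq C$; exhaustiveness and separatedness hold because $\liel$ is finite-dimensional and $\bigcup_n \liel_{(n)} = \liel$, $\bigcap_n \liel_{(n)} = 0$. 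This already gives \emph{some} filtration on any conjugate, but it is too cheap — it does not use the graded assumption and is not obviously the ``right'' one, so the real content is to relate it to $\lieb$'s own grading.

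Next I would use the decomposition $g = g_0 u$ with $g_0\in G_0$ homogeneous and $u\in U$ afforded by the graded assumption $G = G_0\ltimes U$. Since $g_0$ is homogeneous it preserves each $\liel_n$, hence each $\liel_{(n)}$; and the defining property $u\cdot x - x\in\liel_{\ge i+1}$ for $x\in\liel_i$ says precisely that $u$ is \emph{unipotent with respect to the filtration}, i.e. $u\cdot\liel_{(n)} = \liel_{(n)}$ for all $n$ as well (check: $u\cdot x = x + (\text{higher terms})$ for homogeneous $x$, extend linearly, and invert using that $u^{-1}\in U$ too). Therefore $g\cdot\liel_{(n)} = \liel_{(n)}$, and intersecting with $C = g\cdot\lieb$ gives $C_{(n)} = g\cdot\lieb \cap \liel_{(n)} = g\cdot(\lieb\cap\liel_{(n)}) = g\cdot\lieb_{(n)}$, where $\lieb_{(n)} = \lieb\cap\liel_{(n)} = \bigoplus_{i\ge n}(\lieb\cap\liel_i)$ is the filtration attached to $\lieb$'s own grading (here one uses that $\lieb$ is a \emph{graded} subalgebra). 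So the filtration on $C$ is literally the image under $g$ of the grading-filtration of $\lieb$, which is the statement one wants.

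Finally, for the restricted assertion, suppose $\liel$, $(\liel_{(n)})$ and $\lieb$ are all restricted and $g$ is a restricted automorphism. Then $(\liel_{(n)})^{[p]}\subseteq\liel_{(pn)}$ by hypothesis, and since a restricted automorphism commutes with the $p$-map, $C_{(n)}^{[p]} = (g\cdot\lieb_{(n)})^{[p]} = g\cdot(\lieb_{(n)}^{[p]}) \subseteq g\cdot\lieb_{(pn)} = C_{(pn)}$, using that $\lieb$ inherits a restricted filtration from $\liel$ (its $p$-map is the restriction of that of $\liel$, and $\lieb_{(n)}^{[p]}\subseteq\lieb\cap\liel_{(pn)} = \lieb_{(pn)}$). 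Hence $(C_{(n)})$ is a restricted filtration.

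The main obstacle I expect is the middle step: verifying cleanly that the graded assumption forces $u\cdot\liel_{(n)} = \liel_{(n)}$ (not merely $\subseteq$), and — more delicately — that conjugating a \emph{graded} subalgebra by the homogeneous part $g_0$ does not destroy the identification $C_{(n)} = g\cdot\lieb_{(n)}$; one must be careful that $\lieb$ being graded (so $\lieb = \bigoplus(\lieb\cap\liel_i)$) is genuinely used, since for a non-graded subalgebra the equality $\lieb\cap\liel_{(n)} = \bigoplus_{i\ge n}(\lieb\cap\liel_i)$ can fail and the argument collapses. Everything else — the filtration axioms and the restrictedness bookkeeping — is routine.
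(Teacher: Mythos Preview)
Your proposal is correct and follows the same approach as the paper: the paper defines $C_{(n)}:=g\cdot\lieb_{\geq n}$ directly (transport of the graded filtration on $\lieb$) and then says ``one can check the lemma by definitions,'' with the restricted part verified exactly as you do via $g$ commuting with $[p]$. Your write-up is considerably more explicit than the paper's --- in particular you spell out, using $G=G_0\ltimes U$ and the unipotence of $U$ on the filtration, that $g\cdot\liel_{(n)}=\liel_{(n)}$ and hence that the transported filtration $g\cdot\lieb_{(n)}$ coincides with the intrinsic one $C\cap\liel_{(n)}$; this identification is precisely the content hidden behind the paper's ``by definitions.''
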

\begin{proof}
	Denote $  \lieb_i:= \lieb\cap \liel_i, $ then $  \lieb=\ds_{i\in\fz}  \lieb_i. $ Moreover, $  \lieb_{\geq n}:=\ds_{i\geq n}  \lieb_i= \lieb\cap \liel_{\geq n} $ for $ n\in\fz $ defines a filtered structure for $  \lieb. $
	
	Set $ C_{(n)}:=g\cdot  \lieb_{\geq n}. $ One can check the lemma by definitions.
\end{proof}

\begin{definition}\label{gdim}
	Let $ (\liel,(\liel_i)_{i\in\fz}) $ be a $ \fz $-graded Lie algebras with finite dimensional homogeneous spaces. The \textit{graded dimension associated with }$ L $ is defined by:
	$$ 
	\mathsf{gdim}(\liel,(\liel_i)):= \sum_{i\in\fz}(\mathsf{dim} \liel_i) t^i \in \fn[[t,t^{-1}]].
	$$
	
	Moreover, if $ (A,(A_{(n)})_{n\in\fz}) $ is a filtered Lie algebra such that $ \mathsf{gr}(A) $ has finite dimensional homogeneous spaces, we can also define its graded dimension as
	$$
	\mathsf{gdim}(A,(A_{(n)})):=\mathsf{gdim}(\mathsf{gr}(A),(\mathsf{gr}(A)_n)).
	$$
\end{definition}

\begin{lemma}\label{gdim1}
	Let $ (\liel,(\liel_i)_{(i\in\fz)}) $ be a $ \fz $-graded Lie algebras with finite dimensional homogeneous spaces. Suppose $ (\liel,(\liel_i)_{(i\in\fz)}, G,G_0, U) $ satisfies graded assumption, and $ \lieb $ is a graded subspace of $ \liel. $ Then 
		$
		\mathsf{gdim}( g\cdot \lieb)=\mathsf{gdim}( \lieb)
		$ for all $ g\in G. $
		
		Namely, graded dimension is $ G $-invariant.
\end{lemma}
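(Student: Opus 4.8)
The plan is to reduce everything to Lemma~\ref{filtration} together with the definition of graded dimension. By Lemma~\ref{filtration}, any conjugate $C:=g\cdot\lieb$ carries the filtration $C_{(n)}:=g\cdot\lieb_{\geq n}$, where $\lieb_{\geq n}=\bds_{i\geq n}\lieb_i$ and $\lieb_i=\lieb\cap\liel_i$. By Definition~\ref{gdim}, $\mathsf{gdim}(C)=\mathsf{gdim}(\mathsf{gr}(C),(\mathsf{gr}(C)_n))$, so the whole point is to compute the associated graded of this filtration and show its homogeneous pieces have the same dimensions as the $\lieb_i$. Thus it suffices to exhibit, for each $n$, a linear isomorphism $\mathsf{gr}(C)_n=C_{(n)}/C_{(n+1)}\xrightarrow{\ \sim\ }\lieb_n$.

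First I would note that since $g$ is a linear automorphism of $\liel$, it restricts to a linear isomorphism $\lieb_{\geq n}\xrightarrow{\sim}C_{(n)}$ for every $n$, and hence induces a linear isomorphism $\lieb_{\geq n}/\lieb_{\geq n+1}\xrightarrow{\sim}C_{(n)}/C_{(n+1)}=\mathsf{gr}(C)_n$. On the other hand, the direct sum decomposition $\lieb_{\geq n}=\lieb_n\oplus\lieb_{\geq n+1}$ gives a canonical linear isomorphism $\lieb_{\geq n}/\lieb_{\geq n+1}\xrightarrow{\sim}\lieb_n$. Composing these two isomorphisms yields $\dim\mathsf{gr}(C)_n=\dim\lieb_n$ for all $n\in\fz$. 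Since all homogeneous spaces $\liel_i$ are finite-dimensional, so are the $\lieb_i$, and the sum $\sum_{n}(\dim\mathsf{gr}(C)_n)t^n=\sum_n(\dim\lieb_n)t^n$ is a well-defined element of $\fn[[t,t^{-1}]]$; that is, $\mathsf{gdim}(g\cdot\lieb)=\mathsf{gdim}(\lieb)$.

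The only point that requires genuine (if brief) care — and the place where the graded assumption is actually used — is already absorbed into Lemma~\ref{filtration}: namely that $(C_{(n)})_{n\in\fz}$ really is a filtration of the subalgebra $C$ (that $C_{(n)}\supseteq C_{(n+1)}$, that $[C_{(m)},C_{(n)}]\subseteq C_{(m+n)}$, and that $\bigcup_n C_{(n)}=C$ with $C_{(n)}=C$ for $n\ll 0$), so that $\mathsf{gr}(C)$ is defined and its graded dimension makes sense. Granting Lemma~\ref{filtration}, the remaining argument is the purely formal dimension count above, and I do not expect any obstacle in it; the ``hard part,'' such as it is, is simply recognizing that conjugation by $g$ need not preserve the grading but automatically preserves the induced filtration, so that the \emph{graded} invariant to track is that of $\mathsf{gr}(C)$ rather than of $C$ with any naive grading.
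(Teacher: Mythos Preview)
Your argument is correct. Once the filtration on $C=g\cdot\lieb$ is \emph{defined} as $C_{(n)}=g\cdot\lieb_{\geq n}$ (as in Lemma~\ref{filtration}), the bijectivity of $g$ immediately gives $\dim C_{(n)}/C_{(n+1)}=\dim\lieb_{\geq n}/\lieb_{\geq n+1}=\dim\lieb_n$, and nothing more is needed for the graded-dimension equality.

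The paper's proof takes a slightly different route: it writes $g=u\,g_0$ via the semidirect product $G=G_0\ltimes U$ and observes that $\mathsf{gr}(u\cdot g_0\cdot\lieb)=g_0\cdot\lieb$ as graded subspaces of $\liel$. That is, rather than just matching dimensions, it identifies the associated graded explicitly with the $G_0$-translate of $\lieb$; since $g_0$ is homogeneous, $g_0\cdot\lieb$ visibly has the same graded dimension as $\lieb$. Your approach is more elementary and does not actually exploit the decomposition $G=G_0\ltimes U$ (beyond what is hidden in Lemma~\ref{filtration}); the paper's approach is shorter and yields the stronger fact $\mathsf{gr}(g\cdot\lieb)\cong g_0\cdot\lieb$, which is occasionally useful elsewhere (e.g.\ in the proof of Theorem~\ref{homo}).
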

\begin{proof}
By the assumptions of $ (G,G_0,U), $ $ \mathsf{gr}(u\cdot g_0\cdot \lieb)=g_0\cdot  \lieb ,\ \forall g_0\in G_0,u\in U. $
\end{proof}

\subsection{Completely solvable subalgebras}
If $ L $ is arbitrary Lie algebra over $ \mathbb{K}, $ define
$ L^{[n+1]}:=[L,L^{[n]}] ,\ L^{(n+1)}:=[L^{(n)},L^{(n)}] $ and $ L^{[0]}=L^{(0)}=L\text{  for all } n\in\fn.  $
Then $ L^{[1]}=L^{(1)}. $ We say that $ L $ is nilpotent (resp. solvable) if $ L^{[n]}=0 $ (resp. $ L^{(n)}=0 $) for some $ n. $

\begin{definition}
	A Lie algebra $ \lieb $ is called completely solvable if $ [ \lieb,  \lieb] $ is nilpotent (cf. \cite{SF}).
\end{definition}
The importance of completely solvable Lie algebra comes from the following feature.

\begin{lemma} \cite[Lemma 8.6]{SF}
	Let $ (\liel,[p]) $ be a restricted, completely solvable Lie algebra of finite dimension and such that $ C(\liel)^{[p]}=0. $
	\begin{enumerate}
		\item If $ \liet $ is a maximal torus of $ \liel $, then $ \liel=\liet\ds \on{rad}_p(\liel). $
		\item Every irreducible restricted representation of $ \liel $ is one-dimensional.
	\end{enumerate}
\end{lemma}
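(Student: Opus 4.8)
The plan is to derive both parts from a single structural statement: under the hypotheses the nilradical $\on{nil}(\liel)$ coincides with $\on{rad}_p(\liel)$ --- the largest ideal all of whose elements are $[p]$-nilpotent --- the quotient $\liel/\on{rad}_p(\liel)$ is a torus, and $\liel=\liet\ds\on{rad}_p(\liel)$ as vector spaces.

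For (1), complete solvability means $[\liel,\liel]$ is nilpotent, so $[\liel,\liel]\se\on{nil}(\liel)$, $\liel/\on{nil}(\liel)$ is abelian, and $\on{nil}(\liel)$ is $p$-closed (the $p$-closure of a nilpotent ideal is a nilpotent ideal). Let $\liet_0$ be the maximal torus of the nilpotent restricted algebra $\on{nil}(\liel)$; it is central there, because $\on{ad}t$ is nilpotent and, when $t$ is semisimple, also semisimple, hence $0$. As $\liet_0$ is then a toral ideal of $\liel$ (one uses the $[p]$-power identity $\on{ad}(s^{[p]})=(\on{ad}s)^p$), it is central in $\liel$, so $\liet_0=\liet_0^{[p]}\se C(\liel)^{[p]}=0$ --- the one place $C(\liel)^{[p]}=0$ is used in (1). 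Hence every $x\in\on{nil}(\liel)$ has semisimple part in $\liet_0=0$ and so is $[p]$-nilpotent, i.e.\ $\on{nil}(\liel)=\on{rad}_p(\liel)$. Now $\liel/\on{rad}_p(\liel)$ is abelian, and I claim it is a torus: otherwise its $p$-nil part is nonzero, and the preimage $\mathfrak{m}\se\liel$ of that part is an ideal (it contains $[\liel,\liel]$) with $\on{nil}(\liel)\sne\mathfrak{m}$ on which every $\on{ad}_\liel x$ is nilpotent --- a suitable $[p]$-power of $x$ lies in $\on{nil}(\liel)$ --- so by Engel's theorem $\mathfrak{m}$ is a nilpotent ideal strictly larger than $\on{nil}(\liel)$, a contradiction. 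Finally $\liet\cap\on{rad}_p(\liel)=0$, and the quotient map carries the maximal torus $\liet$ onto a maximal torus of $\liel/\on{rad}_p(\liel)$ (maximal tori lift modulo $p$-nil ideals); since that quotient is itself a torus, this image is all of it, whence $\liel=\liet\ds\on{rad}_p(\liel)$.

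For (2), let $V$ be an irreducible restricted $\liel$-module and set $R:=\on{rad}_p(\liel)$. Since $R$ is $p$-nil, the augmentation ideal of its restricted enveloping algebra $u(R)$ is nilpotent, so the fixed space $\{v\in V:Rv=0\}$ is nonzero; because $R\triangleleft\liel$ it is a submodule, hence equals $V$, so $R$ acts as zero. Thus $V$ is an irreducible restricted module for $\liel/R\iso\liet$, which is a torus, and $u(\liet)$ is commutative semisimple --- a finite product of copies of $\mathbb{K}$ --- so all its simple modules are one-dimensional, and $\dim V=1$.

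The main obstacle is the middle of part (1): recognizing that $\liel/\on{nil}(\liel)$ is a torus rather than merely an abelian restricted Lie algebra --- precisely where complete solvability is indispensable --- together with the two standard restricted-Lie-algebra facts the argument leans on, that a toral ideal is central and that maximal tori survive, and stay maximal, under reduction modulo a $p$-nil ideal. Granting these, the remaining steps, and part (2), are routine.
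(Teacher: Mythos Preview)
The paper does not supply its own proof of this lemma; it is stated with a citation to \cite[Lemma 8.6]{SF} and used as a black box to motivate the study of completely solvable subalgebras. So there is nothing in the paper to compare your argument against.

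That said, your plan is correct and is essentially the standard proof one finds in Strade--Farnsteiner. A few of your justifications are terse but can be filled in: the quickest way to see that $\liet_0\se C(\liel)$ is to note that for $t\in\liet_0$ and $x\in\liel$ one has $[t,x]\in[\liel,\liel]\se\on{nil}(\liel)$, hence $[t,[t,x]]=0$ since $\liet_0$ centralizes $\on{nil}(\liel)$, so $(\on{ad}_\liel t)^2=0$; as $\on{ad}_\liel t$ is also semisimple, it vanishes. This gives centrality directly, without first arguing that $\liet_0$ is an ideal. The remaining steps---$\liet_0=\liet_0^{[p]}\se C(\liel)^{[p]}=0$, the Engel argument forcing $\liel/\on{rad}_p(\liel)$ to be a torus, the lifting of maximal tori modulo a $p$-nil ideal, and the $u(R)$ argument for part~(2)---are all standard and correctly invoked.
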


\begin{remark}
	If a solvable subalgebra can be embedded into a Lie algebra of classical type, it must be complete. This will fail for Cartan type Lie algebras. There are examples of subalgebras which are solvable other than completely solvable.
\end{remark}

\subsection{Basics on Jacobson-Witt algebra}

Let $W(n)$ be Jacobson-Witt algebra over $ \fk $, which is the derivation algebra over the truncated polynomial ring $A(n)=\mathbb{K}[x_1,\cdots,
x_n]\slash (x_1^p,\cdots,x_n^p), $ namely, 
$$W(n)=\bigoplus_{i=1}^n A(n)\partial_i,$$
where $ \p_i(x_j)=\delta_{i,j}, $ for all $ i,j=1,\cdots,n. $

Set $ G:=\on{Aut}_p(W(n))^{\circ}. $ There is an isomorphism from $ \on{Aut}(A(n)) $ to $ G $ by sending $ \psi $ to $ \Psi_\psi $ via $ \Psi_\psi: D \mt  \psi\circ D\circ \psi^{-1} $ (cf. \cite{Wi}). 

For each automorphism $ \psi\in \on{Aut}(A(n)), $ set  $\tilde{\psi}_i:=\psi(x_i)\in A(n)_{\geq 1} (i=1,\cdots,n) $ and $ J(\psi):=\big( \p_i(\tilde{\psi}_j) \big)_{n\times n}\in \on{Mat}_{n\times n}(A(n)). $ Then $ \psi $ is determined by $ \tilde{\psi}_1,\cdots,\tilde{\psi}_n $ and $ \psi\in G $ if and only if $ J(\mu) $ is invertible.

We list some basic material on $ W(n) $ in the following proposition.
\begin{proposition}
	Keep notations as above, then we have
	\begin{enumerate}
		\item $ \{ x^{\un{a}}\p_i \mid  1\leq i\leq n; \un{a}\in I^n \} $ is a basis of $ W(n) $ over $ \fk. $ 
		In particular, $ \mathsf{dim}_{\fk}W(n)= np^n. $
		
		\item $ [D, fE]=D(f)E + f[D,E], $ for all $ D,E\in W(n), f\in A(n). $
		
		\item There is a so-called standard  grading structure of $ W(n): $
		$$ W(n) = \ds_{i=-1}^sW(n)_i, s=n(p-1)-1, \text{ where } $$ 
		$$ W(n)_i:=\mathsf{span}_\fk\{ x^{\un{a}}\p_l \mid  \abs{\un{a}}=i+1 \}. $$
		
		\item Suppose $ n\geq 2. $ Then $ W(n) $ is generated by $ W(n)_{-1}\ds W(n)_1. $
		
		\item $ A(n) $ is a $ W(n) $-module by $ f\p_i\cdot g:=f\p_i(g) $ for $ f\p_i\in W(n),$  $ g\in A(n). $ We call the corresponding representation $ \rho :W(n)\ra  \frak{gl}(A(n)) $ the natural representation of $ W(n). $
		
		\item The representation $ \bar{\rho}: W(n)_0\ra  \frak{gl}(A(n)_1) $ induces from $ \rho $ is an isomorphism. Remind that $ W(n)_0= \langle x_i\p_j \mid  i,j=1,\cdots,n \rangle, $ $ A(n)_1 = \langle x_1,\cdots,x_n \rangle, $ and
		$
		\bar{\rho}(x_i\p_j) =E_{ij}, $  which sends $ x_i $ to $ x_j $.
		
	\end{enumerate}
\end{proposition}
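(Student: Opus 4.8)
The plan is to obtain parts (1)--(3), (5) and (6) by unwinding the definitions, and to reserve the real effort for part (4).

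For (1), since $W(n)=\bds_{i=1}^{n}A(n)\partial_i$ and $\{x^{\un{a}}\mid\un{a}\in I^{n}\}$ is a $\fk$-basis of the truncated polynomial algebra $A(n)$, the displayed set is at once a basis, and counting yields $\mathsf{dim}_{\fk}W(n)=n\cdot\abs{I}^{n}=np^{n}$. For (2), I would view $D,E\in W(n)$ as $\fk$-derivations of $A(n)$ and $fE$ as the derivation $g\mapsto fE(g)$; then for every $g\in A(n)$ one has $[D,fE](g)=D\bigl(fE(g)\bigr)-fE\bigl(D(g)\bigr)=D(f)E(g)+f\bigl(D(E(g))-E(D(g))\bigr)$, which equals $\bigl(D(f)E+f[D,E]\bigr)(g)$; this is the asserted formula. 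For (3), I would give the basis vector $x^{\un{a}}\partial_i$ the degree $\abs{\un{a}}-1$; from the bracket formula $[x^{\un{a}}\partial_i,x^{\un{b}}\partial_j]=b_i\,x^{\un{a}+\un{b}-\epsilon_i}\partial_j-a_j\,x^{\un{a}+\un{b}-\epsilon_j}\partial_i$ (where $\epsilon_i\in\fz^{n}$ is the $i$-th unit vector, and a summand is read as $0$ when an exponent leaves $I$ --- harmless, as $0$ lies in every homogeneous part) one sees at once that this is a Lie grading, and since $\abs{\un{a}}$ ranges over $\{0,1,\dots,n(p-1)\}$ the degree ranges over $\{-1,0,\dots,s\}$ with $s=n(p-1)-1$.

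For (5), I would use that $W(n)$ is by construction $\operatorname{Der}_{\fk}(A(n))$ with bracket the commutator of operators, while $f\partial_i\cdot g:=f\partial_i(g)$ is merely the tautological action of a derivation on its algebra; thus the module axiom $[D,E]\cdot g=D\cdot(E\cdot g)-E\cdot(D\cdot g)$ is precisely the identification of the Lie bracket of $W(n)$ with the operator commutator, which one checks on the basis using $\partial_i\partial_j=\partial_j\partial_i$ on $A(n)$. For (6), both $W(n)_0=\langle x_i\partial_j\rangle$ and $\frak{gl}(A(n)_1)\iso\frak{gl}_{n}$ have dimension $n^{2}$; since $W(n)_0$ has degree $0$ it maps $A(n)_1$ into itself, so $\bar\rho$ is obtained from the representation $\rho$ of (5) and is in particular a Lie algebra homomorphism; and as $\bar\rho(x_i\partial_j)$ is the operator $x_k\mapsto\delta_{j,k}x_i$, it carries the basis $\{x_i\partial_j\}$ of $W(n)_0$ bijectively onto the matrix units $E_{ij}$ of $\frak{gl}(A(n)_1)$, so $\bar\rho$ is an isomorphism.

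The substantive point is (4). I would set $L'$ to be the subalgebra of $W(n)$ generated by $W(n)_{-1}\ds W(n)_1$ and show $L'=W(n)$ in two steps. First, $W(n)_0\se L'$: given $i,j$, choose --- here $n\ge 2$ enters --- an index $k\ne i$; then $[\partial_k,x_kx_i\partial_j]=x_i\partial_j$, so $W(n)_0\se[W(n)_{-1},W(n)_1]\se L'$. Second, I would prove $W(n)_{m+1}=[W(n)_1,W(n)_m]$ for every $m\ge 1$; together with the first step and the vanishing of $W(n)_m$ for $m<-1$ and $m>s$, induction on $m$ then gives $L'=W(n)$. The inclusion $[W(n)_1,W(n)_m]\se W(n)_{m+1}$ is immediate from the grading; for the reverse one must write each basis vector $x^{\un{c}}\partial_l$ with $\abs{\un{c}}=m+2\ge 3$ as a bracket of a degree-$1$ monomial and a degree-$m$ monomial. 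For this I would use the two identities, both direct from the bracket formula, $[x_ix_l\partial_l,\,x^{\un{c}-\epsilon_i}\partial_l]=(c_l-1)\,x^{\un{c}}\partial_l$ (valid when $i\ne l$, $c_i\ge 1$) and $[x^{\un{c}-\epsilon_l}\partial_l,\,x_l^{2}\partial_l]=(3-c_l)\,x^{\un{c}}\partial_l$ (valid when $c_l\ge 1$). Since $p>2$, the scalars $c_l-1$ and $3-c_l$ cannot both vanish in $\fk$; checking whether $c_l=0$ or $c_l\ge 1$, and --- in the single residue class $c_l\equiv 3$ --- whether $\un{c}$ is supported only at $l$, one finds that at least one of these identities applies and delivers $x^{\un{c}}\partial_l$ up to an invertible scalar, the sole exception being the monomial $x_l^{3}\partial_l$, which occurs only for $m=1$ and $p>3$ and which I would dispatch by $x_l^{3}\partial_l=[x_l^{2}\partial_j,x_lx_j\partial_l]-2[x_lx_j\partial_j,x_l^{2}\partial_l]$ for any $j\ne l$.

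The only genuine obstacle is the inductive step of (4): one has to organise the bracket computations so that a single commutator already produces the target monomial up to a nonzero scalar, and then isolate and handle by hand the finitely many residue classes modulo $p$ --- ultimately just $\un{c}=3\epsilon_l$ --- in which that scalar degenerates. The remaining parts are formal consequences of the definitions and the explicit bracket formula.
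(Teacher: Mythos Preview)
Your argument is correct. The paper does not actually prove this proposition: it presents the six items as ``basic material on $W(n)$'' and refers the reader to \cite[Chapter~4]{SF} for details, so there is no proof in the paper to compare against beyond that citation. What you have written is a self-contained verification that the reference is meant to cover.

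A few comments on the one nontrivial part, item~(4). Your two bracket identities and the case split on $c_l$ are clean, and your observation that $c_l-1$ and $3-c_l$ cannot simultaneously vanish when $p>2$ is exactly the right organizing principle; the residual case $\un{c}=3\epsilon_l$ is genuinely exceptional and your ad hoc identity $x_l^{3}\partial_l=[x_l^{2}\partial_j,x_lx_j\partial_l]-2[x_lx_j\partial_j,x_l^{2}\partial_l]$ checks out. Note also that for $p=3$ the monomial $x_l^{3}\partial_l$ is already zero in $W(n)$, so your parenthetical ``only for $m=1$ and $p>3$'' is accurate. One stylistic alternative, closer to how such generation statements are often handled in the literature, is to use $[x_i^{2}\partial_j,\,x^{\un{c}-\epsilon_i}\partial_l]$ with $i\ne j,l$ as a third identity, which avoids the exceptional monomial altogether when $n\ge 3$; your route is equally valid and has the merit of working uniformly for $n=2$.

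In item~(6) you compute that $x_i\partial_j$ sends $x_j$ to $x_i$, whereas the proposition as stated says $E_{ij}$ ``sends $x_i$ to $x_j$''. Your computation is the correct one; the discrepancy is a slip in the statement (or a nonstandard labeling of matrix units), not an error in your proof.
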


\begin{remark}
	If $ p=3,\ W(1)\iso sl(2) $ as restricted Lie algebras. We will omit this very special case throughout this paper.
\end{remark}
	

For convenience, we fix the following notations.

\begin{tabular}{l}
$ A(n)_i:=\{$homogeneous truncated polynomials of degree $ i $ in   $ A(n)  \}, $\\

$ A(n)_{\geq i}:=\sum_{j\geq i}A(n)_j, $\\



$ W(n)_{\geq i}:=\sum_{j\geq i}W(n)_j. $
\end{tabular}

For more details of $ W(n), $ reader refers to \cite[chapter 4]{SF}.

\subsection{Automorphisms of $ W(n) $.}


\begin{lemma}{\rm (}\cite{Wi}{\rm )}
Let $ \lieg = W(n) $ over $ \bb{K} $ with $ p \geq 3 $ (unless $ n = 1 $ with assumption $ p > 3 $). The following statements hold.

\begin{enumerate}
\item $ G=\on{Aut}(\lieg) $ coincides with the adjoint group $ \on{Aut}_p(\lieg)^{\circ}. $ Hence it is a connected algebraic group.
\item $ G $ is a semi-direct product $ G = G_0 \ltimes U, $ where $ G_0\iso \on{GL}(n,\bb{K}) $ consists of those automorphisms preserving the $ \fz $-grading of $ \lieg $, and
$$
U = \{g \in G\big| (g - Id_{\lieg})(\lieg_{\geq i}) \subset \lieg_{\geq i+1}\}
$$
\end{enumerate}
\end{lemma}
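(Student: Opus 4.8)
The plan is to derive both parts from an analysis of $\on{Aut}(A(n))$ through the homomorphism $\psi\mapsto\Psi_\psi$, $\Psi_\psi(D)=\psi\circ D\circ\psi^{-1}$, recalled above. Here $\psi$ is encoded by the tuple $(\tilde\psi_1,\dots,\tilde\psi_n)$ with $\tilde\psi_i=\psi(x_i)\in A(n)_{\geq1}$, and $\psi$ is an automorphism exactly when $J(\psi)$ is invertible, equivalently --- since $A(n)$ is local with maximal ideal $A(n)_{\geq1}$ --- when the matrix of the degree-one parts of the $\tilde\psi_i$ lies in $\on{GL}(n,\fk)$. We are already given that $\psi\mapsto\Psi_\psi$ is an isomorphism of $\on{Aut}(A(n))$ onto $G=\on{Aut}_p(\lieg)^{\circ}$; so the only substantial point in part (1) is that \emph{every} Lie-algebra automorphism of $\lieg$ is of this form, after which part (2) is read off from the grading of $A(n)$.

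\medskip
\noindent\textbf{Step 1 (the crux).}
The claim to establish is $\on{Aut}(\lieg)=\{\Psi_\psi\mid\psi\in\on{Aut}(A(n))\}$. Given $\sigma\in\on{Aut}(\lieg)$, I would first show that $\sigma$ preserves the standard filtration $(\lieg_{\geq i})_{i\geq-1}$. For $n\geq2$ this rests on the fact that this filtration is intrinsic to $\lieg$: the subalgebra $\lieg_{\geq0}$ has an intrinsic characterization (e.g. as a maximal subalgebra of minimal codimension $n$ containing no nonzero ideal of $\lieg$), and --- using that $\lieg$ is graded of depth one, that $\lieg_{-1}$ is abelian, and that $\lieg$ is generated by $\lieg_{-1}\ds\lieg_1$ (the proposition above) --- the Weisfeiler normalizer filtration determined by $\lieg_{\geq0}$ is exactly $(\lieg_{\geq i})$. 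Thus $\sigma(\lieg_{\geq i})=\lieg_{\geq i}$ for all $i$, so $\sigma$ induces a graded automorphism of $\on{gr}\,\lieg\iso\lieg$; after correcting $\sigma$ by a suitable linear substitution one may assume this induced map is trivial on $\lieg_{-1}$, and then reading off the action of $\sigma$ on the generators $\partial_i\in\lieg_{-1}$ and $x_j\partial_i\in\lieg_0$ produces a tuple $(\tilde\psi_i)$ in $A(n)_{\geq1}$ with invertible Jacobian, i.e. a $\psi\in\on{Aut}(A(n))$, and a direct check gives $\Psi_\psi=\sigma$. The case $n=1$ (with $p>3$) is handled by hand on $\lieg=\bds_{i=-1}^{p-2}\fk\,x^{i+1}\partial$. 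I expect this to be the main obstacle: it is essentially Wilson's theorem, and its real content is precisely the intrinsic characterization of $\lieg_{\geq0}$ and of the standard filtration; the remaining steps are bookkeeping with the grading of $A(n)$.

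\medskip
\noindent\textbf{Step 2 (proof of (1)).}
Granting Step 1, $\on{Aut}(\lieg)=\{\Psi_\psi\}=G$. Each $\Psi_\psi$ commutes with the $p$-operation, since $\Psi_\psi(D^{[p]})=\psi D^{p}\psi^{-1}=(\psi D\psi^{-1})^{p}=\Psi_\psi(D)^{[p]}$ as maps $A(n)\to A(n)$, so $\on{Aut}(\lieg)=\on{Aut}_p(\lieg)$. Moreover $\on{Aut}(A(n))$ is connected: the admissible tuples $(\tilde\psi_i)$ form a Zariski-open subset of an affine space, and scaling the degree-$j$ component of each $\tilde\psi_i$ by $t^{\,j-1}$ for $t\in\aff{1}$ yields a connected family inside $\on{Aut}(A(n))$ joining $\psi$ to its linear part, which lies in the connected group $\on{GL}(n,\fk)$. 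Hence $\on{Aut}(\lieg)=\on{Aut}_p(\lieg)^{\circ}=G$ is a connected algebraic group.

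\medskip
\noindent\textbf{Step 3 (proof of (2)).}
Let $G_0$ be the subgroup of grading-preserving automorphisms; one checks $G_0=\{\Psi_\psi\mid\tilde\psi_i\in A(n)_1\}$, i.e. the linear substitutions, and $\psi\mapsto(\partial_i\tilde\psi_j)$ identifies $G_0\iso\on{GL}(n,\fk)$. Put $U:=\{\Psi_\psi\mid\tilde\psi_i\equiv x_i\bmod A(n)_{\geq2}\}$. Passing from $\psi$ to its degree-one part is a retraction $G\to G_0$ with kernel $U$, so $G=G_0\ltimes U$ and $U$ is unipotent (for $\psi\in U$, $\Psi_\psi-\on{Id}$ raises the finite filtration, hence is nilpotent). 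Finally, for such $\psi$, the relation $\Psi_\psi(f\partial_j)=\psi(f)\,\Psi_\psi(\partial_j)$ together with a degree count gives $(\Psi_\psi-\on{Id})(\lieg_{\geq i})\se\lieg_{\geq i+1}$ for all $i$; conversely, any $g\in G$ with this property induces the identity on $\on{gr}\,\lieg\iso\lieg$, so its $G_0$-component is trivial and $g\in U$. This is the asserted description of $U$, completing the plan.
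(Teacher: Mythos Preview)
The paper does not supply a proof of this lemma at all: it is simply quoted from Wilson \cite{Wi}, so there is no ``paper's own proof'' to compare against. Your proposal is a faithful reconstruction of the standard argument (which is essentially Wilson's): the decisive point is the intrinsic characterization of $\lieg_{\geq0}$ and hence of the full standard filtration, after which every automorphism is filtration-preserving and can be matched with some $\Psi_\psi$; the semidirect product and the description of $U$ then follow from the grading of $A(n)$ exactly as you outline. One small caution in Step~1: the precise intrinsic description of $\lieg_{\geq0}$ you invoke (``maximal of minimal codimension containing no nonzero ideal'') needs the hypothesis $p>3$ when $n=1$, since for $p=3$ one has $W(1)\iso\mathfrak{sl}_2$ and the filtration is no longer canonical --- this is exactly the exception in the statement, and you handle it, but it is worth flagging that this is where the hypothesis enters rather than being a separate ad hoc case.
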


\begin{remark}
$ (\lieg, (\lieg_{i})_{i\in\fz}, G, G_0, U) $  satisfies graded assumption as definition \ref{grading assumption}.
\end{remark}

 The following lemma is an algorithm to compute automorphisms.
\begin{lemma}\label{arthimatics of automorphisms}
Keep assumptions and notations as above, for all $ \mu\in G, $
$$
\Psi_{\mu}\left(
\begin{matrix}
\p _1\\
\vdots \\
\p _n
\end{matrix}\right)
=\left(
\begin{matrix}
\Psi_{\mu}(\p _1)\\
\vdots \\
\Psi_{\mu}(\p _n)
\end{matrix}\right)
=J(\mu)^{-1}\left(
\begin{matrix}
\p _1\\
\vdots \\
\p _n
\end{matrix}\right).
$$
\end{lemma}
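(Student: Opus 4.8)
The plan is to expand $\Psi_\mu(\p_i)$ in the $A(n)$-basis $\{\p_1,\dots,\p_n\}$ of $W(n)$ and then pin down the coefficient matrix by evaluating against the generators $\tilde\mu_1,\dots,\tilde\mu_n$ of $A(n)$, using $J(\mu)_{kj}=\p_k(\tilde\mu_j)$.

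First, since $\Psi_\mu\in G=\on{Aut}(W(n))$, the element $\Psi_\mu(\p_i)$ lies in $W(n)$; as $W(n)=\bigoplus_{k=1}^n A(n)\p_k$ is free over $A(n)$ with basis $\p_1,\dots,\p_n$, I may write $\Psi_\mu(\p_i)=\sum_{k=1}^n c_{ik}\p_k$ with $c_{ik}\in A(n)$, and set $C=(c_{ik})\in\on{Mat}_{n\times n}(A(n))$.

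Next, view $\Psi_\mu(\p_i)=\mu\circ\p_i\circ\mu^{-1}$ as a derivation of $A(n)$ and apply it to $\tilde\mu_j=\mu(x_j)$. On one hand $(\mu\circ\p_i\circ\mu^{-1})\big(\mu(x_j)\big)=\mu\big(\p_i(x_j)\big)=\mu(\delta_{ij})=\delta_{ij}$, since $\mu$ fixes $\fk$ pointwise. On the other hand, using the expansion above together with $\p_k(\tilde\mu_j)=J(\mu)_{kj}$, one gets $\Psi_\mu(\p_i)(\tilde\mu_j)=\sum_k c_{ik}\,\p_k(\tilde\mu_j)=\sum_k c_{ik}\,J(\mu)_{kj}$. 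Comparing the two computations yields $\sum_k c_{ik}J(\mu)_{kj}=\delta_{ij}$ for all $i,j$, i.e.\ $C\,J(\mu)=I_n$. Since $\mu\in G$, the matrix $J(\mu)$ is invertible over $A(n)$, hence $C=J(\mu)^{-1}$. Rewriting $\Psi_\mu(\p_i)=\sum_k c_{ik}\p_k$ for $i=1,\dots,n$ as a single matrix identity then gives exactly the claimed formula.

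The argument is essentially bookkeeping, so I do not expect a genuine obstacle; the only points requiring care are (i) ensuring $\Psi_\mu$ really lands in $W(n)$ so the $A(n)$-linear expansion is legitimate, which is immediate from $\Psi_\mu\in\on{Aut}(W(n))$, and (ii) matching indices and left/right conventions so that the evaluation relation becomes $C\,J(\mu)=I_n$ rather than its transpose. Invertibility of $J(\mu)$ for $\mu\in G$ is already recorded in the preliminaries, so it may be invoked directly.
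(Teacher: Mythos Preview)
Your proof is correct and follows essentially the same approach as the paper: write $\Psi_\mu(\p_i)=\sum_k c_{ik}\p_k$, evaluate on the generators $\tilde\mu_j$ to obtain $C\,J(\mu)=I_n$, and conclude $C=J(\mu)^{-1}$. The paper compresses this into a single line (with somewhat imprecise indexing), while you have written out the bookkeeping in full; the content is the same.
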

\begin{proof}
It is a direct computation that
$ \big( \p_i(\tilde{\mu}_j) \big)(\Psi_{\mu}(\p _i)(x_j))=I_n. $ Hence lemma holds.
\end{proof}

\subsection{Maximal Torus of $ W(n) $}
A torus $ \liet $ is an abelian restricted subalgebra consisting of semisimple elements, i.e. $ X\in (X^{[p]})_p $ for all $ X\in \liet $, where $ (X^{[p]})_p $ denotes for the restricted subalgebra generated by  $ X^{[p]} $ (see \cite{SF}).
According to Demuskin's result \cite{De1}, we have the following conjugacy property for maximal torus of $ W(n). $
\begin{theorem}
	Let $ \lieg = W(n). $ Then the following statements hold.
	\begin{enumerate}
		\item Two maximal torus $ \liet, \pr{\liet} $ belong to the same $ G $-orbit if and only if $$ \on{dim}(\liet\cap \lieg_{\geq0})= \on{dim}(\pr{\liet}\cap \lieg_{\geq0}). $$
		\item There are $ (n+1) $ conjugacy classes for the maximal torus of $ \lieg $. Each maximal torus of $ \lieg $ is conjugate to one of
		\[ \liet_r=\sum_{i=1}^{n} \fk z_i\p_i,\ r=0,\cdots,n, \]
		where $ z_i=x_i $ for $ i=1,\cdots,n-r, $ and $ z_i=1+x_i $ for $ i=n-r+1,\cdots,n. $
	\end{enumerate}
\end{theorem}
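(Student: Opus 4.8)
The plan is to reduce part (1) to part (2), and to prove part (2) by extracting, from an arbitrary maximal torus, a coordinate system on $A(n)$ adapted to it. First, note that $\lieg_{\geq 0}$ is stable under the whole group $G$: writing $G=G_0\ltimes U$, each $g_0\in G_0$ preserves every homogeneous component, while each $u\in U$ satisfies $u\cdot\lieg_{\geq i}\subseteq\lieg_{\geq i}$ by the description of $U$; hence $\dim(\liet\cap\lieg_{\geq 0})$ depends only on the $G$-orbit of $\liet$. Next I would verify directly that each $\liet_r=\sum_{i=1}^{n}\fk\,z_i\partial_i$ is a maximal torus: since $\partial_i(z_j)=\delta_{ij}$ the $z_i\partial_i$ commute, and in the basis $\{z_1^{a_1}\cdots z_n^{a_n}\}$ of $A(n)$ each $z_i\partial_i$ acts diagonally with eigenvalue $a_i\in\mathbb{F}_p$, so every element of $\liet_r$ is semisimple and $(z_i\partial_i)^{[p]}=z_i\partial_i$; thus $\liet_r$ is an $n$-dimensional torus, maximal once one knows $n$ is the largest possible dimension. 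A direct computation gives $\liet_r\cap\lieg_{\geq 0}=\sum_{i\le n-r}\fk\,x_i\partial_i$, whence $\dim(\liet_r\cap\lieg_{\geq 0})=n-r$; so the $\liet_r$, $r=0,\dots,n$, are pairwise non-conjugate, giving at least $n+1$ classes. Granting that every maximal torus is conjugate to some $\liet_r$, part (1) follows, for if $\dim(\liet\cap\lieg_{\geq0})=\dim(\liet'\cap\lieg_{\geq0})=d$ then both are conjugate to $\liet_{n-d}$, hence to each other.

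The real content of part (2) is that every maximal torus $\liet$ is conjugate to some $\liet_r$. Via the natural representation $\rho$, $A(n)$ becomes a $\liet$-module; since $\liet$ is a torus, $A(n)=\bigoplus_{\lambda}A(n)_\lambda$ is a direct sum of weight spaces, and the eigenvalue of a toral element on a weight vector lies in $\mathbb{F}_p$, so the weights take values in the prime field. The key structural step — and the one place where maximality of $\liet$ is genuinely used — is to show that $\dim\liet=n$, that $A(n)_0=\fk$, that every weight space is at most one-dimensional, and that one can choose weight vectors $y_1,\dots,y_n$ whose weights form an $\mathbb{F}_p$-basis of the weight lattice so that the monomials $y^{\un a}$, $\un a\in I^n$, form a $\fk$-basis of $A(n)$. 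Equivalently, $A(n)=\fk[y_1,\dots,y_n]$ with relations $y_i^p=c_i$ for suitable $c_i\in\fk$; the relation is forced since $y_i^p$ lies in the weight space for $p\cdot\mathrm{wt}(y_i)=0$, i.e.\ in $A(n)_0=\fk$.

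Granting this, I would rescale each $y_i$ so that $c_i\in\{0,1\}$, put $r:=\#\{i:c_i=1\}$, and reorder so that $c_i=0$ for $i\le n-r$ and $c_i=1$ for $i>n-r$. Setting $u_i:=y_i$ if $c_i=0$ and $u_i:=y_i-1$ if $c_i=1$, one has $u_i^p=0$, and since the change from $\{y^{\un a}\}$ to $\{u^{\un a}\}$ is unitriangular, $\{u^{\un a}\}$ is again a basis, so $A(n)=\fk[u_1,\dots,u_n]/(u_i^p)$. Hence $x_i\mapsto u_i$ defines an automorphism $\psi$ of $A(n)$ (the matrix $J(\psi)=(\partial_i(u_j))$ is invertible because the images of the $u_i$ are independent in $A(n)_{\geq1}/A(n)_{\geq 2}$), so $\psi\in\on{Aut}(A(n))\cong G$. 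By Lemma \ref{arthimatics of automorphisms}, $\Psi_\psi(\partial_i)=\partial_{u_i}$ (the derivation with $\partial_{u_i}(u_j)=\delta_{ij}$), and $\psi(z_i)=y_i$ in both cases, so $\Psi_\psi(z_i\partial_i)=\psi(z_i)\,\Psi_\psi(\partial_i)=y_i\partial_{u_i}$. Finally, $y_i\partial_{u_i}$ scales $y^{\un a}$ by $a_i$, so $\sum_i\fk\,y_i\partial_{u_i}$ is exactly the space of derivations acting diagonally on the basis $\{y^{\un a}\}$ with eigenvalue a $\fk$-linear function of $\un a$; this is precisely how $\liet$ acts, and being faithful of dimension $n$, $\liet$ is determined by its action. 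Therefore $\Psi_\psi(\liet_r)=\liet$, i.e.\ $\liet$ is conjugate to $\liet_r$.

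The main obstacle is the structural step of the second paragraph: showing that maximality of $\liet$ forces $A(n)_0=\fk$, forces one-dimensional weight spaces, and guarantees a weight-vector coordinate system (for a non-maximal torus a weight vector may well satisfy, say, $y_i^2=0$, so that $\fk[y_i]$ is too small). This is essentially Demuskin's theorem \cite{De1}; the argument proceeds by showing that any such degeneracy yields a toral derivation commuting with $\liet$ but not contained in it, contradicting maximality. The remaining manipulations are routine.
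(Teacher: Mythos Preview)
The paper does not prove this theorem at all; it simply records it as Demu\v{s}kin's result and cites \cite{De1}. Your sketch therefore goes well beyond what the paper offers, and the outline is essentially sound. The invariance of $\dim(\liet\cap\lieg_{\geq 0})$ under $G$, the verification that each $\liet_r$ is a maximal torus with $\dim(\liet_r\cap\lieg_{\geq 0})=n-r$, and the reduction of (1) to (2) are all correct and routine. Your construction of the automorphism $\psi$ once the weight-vector coordinate system $\{y_i\}$ is in hand is also correct: the rescaling to force $y_i^p\in\{0,1\}$, the passage to nilpotent generators $u_i$, the check that $x_i\mapsto u_i$ is an automorphism, and the identification $\Psi_\psi(\liet_r)=\sum_i\fk\,y_i\partial_{u_i}=\liet$ all go through as you describe. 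You are also right that the only substantive gap is the ``structural step'' (that maximality of $\liet$ forces $A(n)_0=\fk$, one-dimensional weight spaces, and $\dim\liet=n$), and that this is precisely the content of Demu\v{s}kin's argument; your heuristic for it---produce from any degeneracy a semisimple derivation centralising $\liet$ but outside it---is the right shape. In short, both you and the paper ultimately defer the hard step to \cite{De1}; what you add is an explicit account of the elementary reductions surrounding it.
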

Theorem 2.2. 
We call these $ \liet_r $ the standard maximal torus of $ W(n) $.

\subsection{Gradings associated with $ \liet_r $}
Note that  $ A(n) $ can be presented as the quotient algebra $ \fk[T_1;\cdots; T_n]/ (T_1^p-1,\cdots, T_n^p-1). $ Denote the image of $ T_i $ by $ y_i $ in the quotient algebra. Then we can write $ A(n) $
as $ \fk[y_1,\cdots,y_n]. $ Comparing with the notations in section 1.3, we have $ y_i=1+x_i,\ i=1,\cdots,n. $ 

Now, fix $ r=0,1,\cdots ,n, $ $ A(n) $ can be presented as a truncated polynomial
\[ \fk[z_1,\cdots, z_{n-r};z_{n-r+1},\cdots, z_n] \]
with generator $ z_i=x_i,\ z_j=y_j,\text{ where } i=1,\cdots,n-r;j=n-r+1,\cdots,n, $ and defining relations:
\[ [x_i,x_{\pr{i}}]=[y_i,y_{\pr{i}}] = [x_i,y_j] =x_i^p =y_j^p-1 =0. \]

$ \text{Moreover, } W(n)=\sum_{i=1}^{n}\sum_{\co{a}(i)}\fk Z^{\co{a}(i)}\p_i $ 
where $ Z^{\co{a}(i)}=z_1^{a_{i1}} z_2^{a_{i2}}\cdots z_n^{a_{in}} $ with $ \co{a}(i) = (a_{i1} ,\cdots, a_{in})\in I^n. $ 

There is a space decomposition as below, called
$ \fz(\liet_r) $-grading:
\[ W(n)=\bds_s W(n)_s^{(\liet_r)},\ \text{with}\
W(n)_s^{(\liet_r)} = \langle Z^{\co{a}(i)}\p_i\mid \abs{\co{a}(i)} = s+1, i=1,\cdots,n \rangle. \]
In fact, every homogenous space $ W(n)_s^{(\liet_r)} $ is a $ \liet_r $-module. For the case $ r = 0, $ the $ \fz(\liet_0) $-graded structure coincides with the standard graded structure of $ W(n), $ namely $ W(n)_s = W(n)_s^{(\liet_0)} $ for all $ s. $


Let $ \lieh $ be a subalgebra of $ W(n). $ Call $ \lieh $ a $ \fz(\liet_r) $-\textit{graded subalgebra} if $ \lieh=\sum_i \lieh_i^{(\liet_r)}, $ where $ \lieh _i^{(\liet_r)} =\lieh \cap W(n)_i^{(\liet_r)}. $

We refine $ \fz(\liet_r) $-grading. If a subalgebra $ \lieh $ is $ \fz(\liet_r) $-graded, we set for every $ \alpha\in I^n $
with $ \abs{\alpha}=s+1, \lieh_{\alpha}^{(\liet_r)} = \{ v\in \lieh_s^{(\liet_r)} \mid \text{ad} (z_i\p_i)(v)=\alpha_iv, i=0,\cdots,n \}. $ Then we call $ \lieh $ is $ \liet_r $-\textit{graded} if $ \lieh $ is $ \fz(\liet_r) $-graded and \[ \lieh=\sum_{\alpha\in I^n}\lieh_{\alpha}^{(\liet_r)}. \]

Moreover, $ \lieh $ is called {\em torus graded} if $ \lieh $ contains a maximal torus of $ W(n) $ and for every maximal torus $ \liet\subseteq\lieh, $ $ \varphi\cdot\lieh $ is $ \liet_r $-graded where $ \liet_r=\varphi\cdot \liet $ for some $ \varphi\in G $ and $ r=0,\cdots,n. $


\section{Completely solvable subalgebras of Jacobson-Witt algebras}


\subsection{Homogeneous Borel subalgebras of restricted Lie algebras}
B.Shu introduces homogeneous Borel subalgebras of $ W(n) $ in \cite{S7} which is by definition a torus graded and maximal solvable subalgebras of $ W(n). $
He proves that there are $ n+1 $ conjugation classes of homogeneous Borel subalgebras of $ W(n) $ with representatives $ B_0,\cdots,B_n. $ The definition of $ B_i (i=0,\cdots,n) $ refers to \cite{S7}.

We mention here that $ B_r $ is not completely solvable except $ r=0. $ For example,
	$ B_1\subseteq W(2) $ is a subspace spanned by
	$$
	S:= \{ \p_1,\ x_1\p_1,\ \p_2,\ x_1\p_2, \cdots,x_1^{p-1}\p_2,\ x_2\p_2,\ x_1x_2\p_2 \cdots, x_1^{p-1}x_2\p _2 \}.
	$$
	One can check the following statements.
	
	\begin{enumerate}
		\item $ B_1 $ is a maximal solvable subalgebra  (\cite{S7}).
		
		\item $ \p_2\in (B_1^{[1]})^{[n]} $ for all $ n\geq 0. $ Therefore, $ \liel^{[1]} $ is not nilpotent and $ \liel $ is not completely solvable.
		
		\noindent In fact, since $ \{x_2\p_2, \p_2\} \subseteq B_1^{[1]}, $ and $ [x_2\p_2, \p_2]=-\p_2, $ one can prove it by induction.
	\end{enumerate}

\subsection{}\label{def of standard comp Borel} We introduce the following subspaces which will be useful.


$\bullet$ $\lieb_0=\lieb \ds W(n)_{\geq 1}$,  where $ \lieb $ consists of all upper triangular matrices of $  \frak{gl}(n)$.

$\bullet$ $\lieb_n=\mathfrak{t}_0 \ds C_n$,  where $C_n= \langle X^{\co {a}(i)}\p_i \mid \co {a}(i)=(a_1,  \cdots, a_{i-1}, 0, \cdots, 0), a_j\in I
\rangle, $ and $ \mathfrak{t}_0= \langle x_i\p _i \mid i=1, \cdots, n\rangle $ is the standard torus of $ W(n). $

$\bullet$ For arbitrary $ q=1,2,\cdots, n-1$,  let
$$  \lieb_q= \lieb_0(x_1,\cdots, x_{n-q}) \ds Q_q \ds  \lieb_q (x_{n-q+1}, \cdots, x_ n)= \mathfrak{t}_0 \ds C_q. $$
$$ Q_q=\langle u^{\co {a}(i)}w^{\co {b}(i)}\p _i \mid (\co {a}(i), \co {b}(i), i)\in \Gamma_1 \cup \Gamma_2\rangle,$$
\[ C_q=\langle u^{\co {a}(i)}w^{\co {b}(i)}\p _i \mid (\co {a}(i), \co {b}(i), i)\in \Lambda_1 \cup \Lambda_2\rangle, \]
where  $ (\co {a}(i), \co {b}(i), i):= (a_1, \cdots, a_{n-q}, b_1, \cdots, b_q,i)\in I ^{n-q}\times I ^q \times \{1,\cdots, n\}$,
\begin{align*}
\Gamma_1=&\{(\co {a}(i),\co {b}(i),i)  \mid \ 1\leq i\leq n-q, \abs{\co {b}(i)}>0, \text{either } \abs{\co {a}(i)}>1\text{ or }\ \abs{\co {a}(i)}=1=\sum_{k=1}^{i-1} a_k\},\\ 
\Gamma_2=&\{(\co {a}(i),\co {b}(i),i)  \mid n-q+1\leq i\leq n, \abs{\co {a}(i)}>0 \big\},\\
\Lambda_1=&\{(\co {a}(i),\co {b}(i),i)  \mid \ \ 1\leq i\leq n-q, \ \text{either}\ \abs{\co {a}(i)}>1\text{\ or\ }\abs{\co {a}(i)}=1=\sum_{k=1}^{i-1} a_k\},\\
\Lambda_2=&\{(\co {a}(i),\co {b}(i),i)  \mid n-q+1\leq i\leq n, \co {b}(i)=(b_1,\cdots, b_{i-q-1},0, \cdots)\ \text{once}\ \co {a}(i)=0 \big\}.
\end{align*}

\begin{remark} Note that $ \liet_0\subset \lieb_q,\ q=0,\cdots,n$. 
After proposition \ref{conj class of complete Borel}, we will see that all $ \lieb_r(r=0,\cdots,n) $ are torus graded.
\end{remark}

\begin{lemma}
	$  \lieb_0^{[1]}=\mathfrak{n}\ds W(n)_{\geq 1} $ is nilpotent, where $ \mathfrak{n} $ is  nilradical of $ \lieb. $ Moreover, $  \lieb_0 $ is a maximal completely solvable subalgebra.
\end{lemma}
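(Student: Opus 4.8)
The plan is to prove the two assertions separately: first that $\lieb_0^{[1]} = \mathfrak{n} \ds W(n)_{\geq 1}$ is nilpotent, and then that $\lieb_0$ is a maximal completely solvable subalgebra of $W(n)$. For the first part, I would begin by computing the derived subalgebra $[\lieb_0,\lieb_0]$ directly. Writing $\lieb_0 = \mathfrak{t}_0 \ds \mathfrak{n} \ds W(n)_{\geq 1}$ where $\mathfrak{t}_0 = \langle x_i\p_i\rangle$ is the standard torus and $\mathfrak{n}$ the strictly upper triangular part of $\mathfrak{gl}(n) = W(n)_0$, I would note that $[\mathfrak{t}_0,\mathfrak{t}_0]=0$, that $[\mathfrak{t}_0,\mathfrak{n}] \subseteq \mathfrak{n}$, that $[\mathfrak{t}_0, W(n)_{\geq 1}] \subseteq W(n)_{\geq 1}$, and $[W(n)_{\geq i}, W(n)_{\geq j}] \subseteq W(n)_{\geq i+j}$ by the grading. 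Hence the torus contributes nothing new to the bracket and $\lieb_0^{[1]} = \mathfrak{n} \ds W(n)_{\geq 1}$ exactly. For nilpotency, I would use the grading: set $N_{(i)} := \mathfrak{n} \cap W(n)_{(i)}$ extended so that $N := \mathfrak{n} \ds W(n)_{\geq 1}$ sits in a filtration whose associated graded is the $\fz_{\geq 0}$-graded algebra $\mathfrak{n} \oplus \bigoplus_{i\geq 1} W(n)_i$; since $W(n)$ is a finite-dimensional graded algebra concentrated in degrees $-1,\dots,s$ with $s=n(p-1)-1$, every element of $N$ has non-negative degree, and iterated brackets of length $k$ land in $W(n)_{\geq \min(k, \dots)}$. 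The subtlety is the degree-$0$ part $\mathfrak{n}$: here I would invoke that $\mathfrak{n}$ is a nilpotent Lie algebra in the classical sense (strictly upper triangular matrices), so $\mathfrak{n}^{[k]} = 0$ for $k=n$, and combine this with the fact that brackets involving a positive-degree factor strictly increase degree, so after finitely many steps all iterated brackets vanish. A clean way to package this: show $N^{[k]} \subseteq \mathfrak{n}^{[k]} \oplus W(n)_{\geq 1}$ and more precisely that the degree-$0$ component of $N^{[k]}$ is contained in $\mathfrak{n}^{[k]}$, so for $k \geq n$ we get $N^{[k]} \subseteq W(n)_{\geq 1}$, and then $N^{[k+m]} \subseteq W(n)_{\geq m}$ for all $m$, which vanishes once $m > s$.

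For the completely solvable claim, once $\lieb_0^{[1]}$ is shown nilpotent, $\lieb_0$ is completely solvable by definition. So the real content of the second part is \emph{maximality}: I must show no completely solvable subalgebra of $W(n)$ properly contains $\lieb_0$. Suppose $\lieb_0 \sne \mathfrak{c}$ with $\mathfrak{c}$ completely solvable. Since $\lieb_0 \supseteq W(n)_{\geq 1}$ and $\lieb_0 \cap W(n)_0 = \lieb$ (upper triangular matrices), an element of $\mathfrak{c} \setminus \lieb_0$ contributes, after reduction modulo $W(n)_{\geq 1}$ and modulo elements of $W(n)_{-1}$, either a new component in $W(n)_{-1}$ or a new component in $W(n)_0 \setminus \lieb$. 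I would handle the degree-$0$ enlargement first: adding any $x_i\p_j$ with $i>j$ to the Borel of $\mathfrak{gl}(n)$ generates all of $\mathfrak{gl}(n)$ (or at least $\mathfrak{sl}(n)$ together with the torus), which is not solvable, contradiction — but one has to be careful that this argument survives the presence of $W(n)_{\geq 1}$; the point is that $\mathfrak{c}^{[1]}$ would then contain, modulo $W(n)_{\geq 1}$, the derived algebra of a non-solvable subalgebra of $W(n)_0$, forcing $\mathfrak{c}^{[1]}$ to be non-solvable (hence not nilpotent), contradicting complete solvability. For the degree-$(-1)$ enlargement: if some $\p_k + (\text{higher terms})$ lies in $\mathfrak{c}$, then bracketing repeatedly with $x_k\p_k \in \lieb_0$ and with suitable $x^{\un a}\p_l \in W(n)_{\geq 1} \subseteq \lieb_0$ produces, inside $\mathfrak{c}^{[1]}$, elements like $\p_k$ and then $x_l\p_m\cdots$, and I would show this forces $\mathfrak{c}^{[1]}$ to contain a copy of something non-nilpotent — concretely, from $\p_k \in \mathfrak{c}^{[1]}$ and $x_k\p_k \in \mathfrak{c}^{[1]}$ (since $x_k\p_k = [x_k\p_k, \cdot]$-type elements are already in $[\lieb_0,\lieb_0]$? — here I must be careful, as $x_k\p_k$ is in the torus, not the derived algebra, so instead I would use $x_l\p_k \in \mathfrak{n} \subseteq \lieb_0^{[1]}$ for $l<k$, or note directly that $\mathfrak{c}$ now contains both $\p_k$ and elements $x_k^2\p_k$ etc. whose brackets generate an $\mathfrak{sl}_2$-like or infinite chain, mirroring exactly the computation done in the excerpt for $B_1$ showing $\p_2 \in (B_1^{[1]})^{[n]}$ for all $n$).

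I anticipate the maximality argument, specifically the degree-$(-1)$ case, to be the main obstacle: one cannot simply say "adding $\p_k$ generates too much" because $\lieb_0$ already contains the negative-degree-free part and a careful bracket bookkeeping is needed to locate a non-nilpotent element of $\mathfrak{c}^{[1]}$. The strategy I would commit to is: let $v \in \mathfrak{c} \setminus \lieb_0$ and decompose $v = v_{-1} + v_0 + v_{\geq 1}$ along the standard grading with $v_{-1} \neq 0$ or $v_0 \notin \lieb$. Since $v_{\geq 1} \in \lieb_0 \subseteq \mathfrak{c}$, we may assume $v = v_{-1} + v_0$ with $v_{-1} = \sum c_k \p_k \neq 0$ (the $v_0\notin\lieb$, $v_{-1}=0$ case being the classical contradiction above). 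Applying $\on{ad}(x^{\un a}\p_l)$ for $\un a$ with $|\un a| = 2$ and various $l$ — all such operators lie in $W(n)_{\geq 1} \subseteq \lieb_0$ — I would extract from $\mathfrak{c}^{[1]}$ an element of the form $\p_k$ for some $k$ (killing $v_0$ and isolating one coordinate of $v_{-1}$), and also, by bracketing $v$ with $x_k^2\p_l \in W(n)_{\geq 1}$, an element with a $x_k\p_l$ leading term. Then $\mathfrak{c}^{[1]}$ contains $\p_k$ and (a conjugate of) $x_k\p_k$ or $x_k\p_l$; iterating as in the displayed $B_1$-computation in the excerpt shows $\p_k \in (\mathfrak{c}^{[1]})^{[m]}$ for all $m \geq 0$, so $\mathfrak{c}^{[1]}$ is not nilpotent, the desired contradiction. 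Throughout, the key ready-made facts I would lean on are the grading properties in Proposition 2.x (parts (2),(3),(4)), the classical non-solvability of $\mathfrak{gl}(n)$ generated by the Borel plus one extra root vector, and the explicit $B_1$ non-nilpotency computation already recorded in the excerpt, which serves as a template.
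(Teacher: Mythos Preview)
Your treatment of the first assertion is correct and essentially matches the paper's (very terse) argument: the paper simply notes that $W(n)_{\geq 1}$ is a $W(n)_0$-module and $[\lieb,\lieb]=\mathfrak{n}$, then asserts nilpotency of $\mathfrak{n}\oplus W(n)_{\geq 1}$ without further comment. Your filtration bookkeeping (the degree-$0$ component of $N^{[k]}$ lies in $\mathfrak{n}^{[k]}$, then use that positive-degree brackets climb the grading) is exactly the right way to fill in that assertion.

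The real divergence is in maximality. The paper does \emph{not} argue directly: it observes that \cite{S7} already proves $\lieb_0$ is a homogeneous Borel subalgebra, i.e.\ a \emph{maximal solvable} subalgebra. Since completely solvable implies solvable, any completely solvable subalgebra properly containing $\lieb_0$ would be a solvable subalgebra properly containing a maximal solvable one --- contradiction. That is the entire proof. Your direct attack is therefore re-proving (a special case of) the cited result rather than using it.

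Your direct argument is salvageable but has a concrete slip. In the case $v_{-1}\neq 0$, you write that applying $\on{ad}(x^{\un a}\p_l)$ with $|\un a|=2$ (a degree-$1$ operator) to $v=v_{-1}+v_0$ will ``extract from $\mathfrak{c}^{[1]}$ an element of the form $\p_k$''. It will not: a degree-$1$ operator sends $v$ into $W(n)_{\geq 0}$, so you never land back in degree $-1$ this way. The clean fix is to first separate $v_{-1}$ from $v_0$ inside $\mathfrak{c}$ (e.g.\ note $v+\sum_j[x_j\p_j,v]=v_0$ since $\sum_j\on{ad}(x_j\p_j)$ kills $W(n)_0$ and acts as $-1$ on $W(n)_{-1}$, so $v_0\in\mathfrak{c}$ and hence $v_{-1}\in\mathfrak{c}$), and then observe that $\mathfrak{c}$ contains the non-solvable triple $\langle \p_k, x_k\p_k, x_k^2\p_k\rangle$ for any $k$ with $c_k\neq 0$. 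This in fact shows $\mathfrak{c}$ is not even solvable --- which is precisely the content of the \cite{S7} result the paper invokes.
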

\begin{proof}
	Since $ W(n)_{\geq 1} $ is a $ W(n)_0 $-module and $ [\lieb, \lieb]= \mathfrak{n}, $ $  \lieb_0^{[1]}=\mathfrak{n}\ds W(n)_{\geq 1} $ which is nilpotent. Moreover, \cite{S7} has proven that $  \lieb_0 $ is a Borel. Lemma holds.
\end{proof}

\begin{lemma}\label{bn}
	$  \lieb_n $ is a maximal completely solvable subalgebra.
\end{lemma}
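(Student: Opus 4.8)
The plan is to split the statement into two halves: (i) $\lieb_n = \liet_0 \oplus C_n$ is completely solvable, and (ii) it is maximal among completely solvable subalgebras. For (i), I would first show that the derived subalgebra $\lieb_n^{[1]}$ lies inside $C_n$, in fact inside $\langle X^{\co{a}(i)}\p_i \mid \co{a}(i)=(a_1,\dots,a_{i-1},0,\dots,0),\ \abs{\co{a}(i)}\ge 1\rangle$ — the span of the non-constant monomials in $C_n$ together with the contribution of brackets against $\liet_0$. The key structural observation is that $C_n$ has a "staircase" shape: $X^{\co{a}(i)}\p_i$ appears only when $\co{a}(i)$ is supported on coordinates $1,\dots,i-1$. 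Bracketing two such elements, $[X^{\co{a}(i)}\p_i, X^{\co{b}(j)}\p_j]$, produces terms $X^{\co{a}(i)}\p_i(X^{\co{b}(j)})\p_j - X^{\co{b}(j)}\p_j(X^{\co{a}(i)})\p_i$, and one checks the resulting exponent vectors remain supported on strictly lower indices, so $C_n$ is a subalgebra; moreover each bracket strictly increases the degree $\abs{\co{a}}$ unless it involves a degree-zero factor, which cannot happen inside $\lieb_n^{[1]}$ except through $\liet_0$-action, which preserves degree. I would then argue nilpotency of $\lieb_n^{[1]}$ by exhibiting an explicit decreasing filtration: order monomials $X^{\co{a}(i)}\p_i$ by the pair $(i, \abs{\co{a}(i)})$ lexicographically (or by a weight that strictly drops under every bracket), and show $(\lieb_n^{[1]})^{[m]}=0$ for $m$ large. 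The cleanest route is probably to note that ad$(X^{\co{a}(i)}\p_i)$ raises the "total index weight" $\sum_j j\cdot(\text{exponent of }x_j) + (\text{index of }\p)$ in a way that, combined with the bounded range of exponents, forces nilpotency.

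For (ii), maximality, I would argue that $\lieb_n$ is already a homogeneous Borel subalgebra in the sense of \cite{S7} — indeed $C_n$ together with $\liet_0$ should coincide (up to conjugacy) with the representative $B_n$ from \cite{S7}, which is established there to be maximal solvable. Since a completely solvable subalgebra is in particular solvable, any completely solvable $\lieb' \supseteq \lieb_n$ is solvable and containing a maximal solvable subalgebra, hence equals $\lieb_n$. So maximality-as-completely-solvable follows from maximality-as-solvable, which is cited. The only thing to verify carefully is the identification of $\liet_0 \oplus C_n$ with (a conjugate of) $B_n$; if that identification is awkward, the fallback is a direct argument: take any $D \in W(n)\setminus \lieb_n$, decompose it into $\liet_0$-weight components using the $\liet_0$-grading, find a weight component $D_\alpha$ not in $\lieb_n$, and show that the subalgebra generated by $\lieb_n$ and $D_\alpha$ has non-nilpotent (indeed non-solvable) derived algebra — typically because $D_\alpha$ involves some $x_k\p_j$ with $k \ge j$ or a monomial of the "wrong" staircase type, and bracketing it repeatedly against elements of $C_n$ regenerates itself (an $\mathfrak{sl}_2$-type or a non-nilpotent pattern appears, exactly as in the $B_1 \subseteq W(2)$ counterexample computation already displayed).

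The main obstacle I anticipate is the nilpotency of $\lieb_n^{[1]}$: unlike $\lieb_0$, where $W(n)_{\ge 1}$ is visibly a nilpotent ideal because of the honest $\fz$-grading, here $C_n$ is not homogeneous for the standard grading (it mixes degrees through the staircase condition), so I need a genuinely combinatorial argument producing a single integer-valued function on monomials that strictly increases under every nonzero bracket within $\lieb_n^{[1]}$ and takes only finitely many values on $W(n)$. Constructing that function — reconciling the two competing effects of "degree goes up" and "derivative index can go down" — is the crux; once it is in hand, both the subalgebra property and nilpotency of the derived algebra drop out, and maximality is comparatively routine given \cite{S7}.
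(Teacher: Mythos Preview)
Your primary route to maximality has a genuine error. You propose to identify $\lieb_n = \liet_0 \oplus C_n$ with the homogeneous Borel $B_n$ of \cite{S7} and then cite its maximality as a \emph{solvable} subalgebra. But the paper states explicitly (Section~3.1) that $B_r$ is \emph{not} completely solvable for $r\ge 1$; since complete solvability is preserved by automorphisms, $\lieb_n$ cannot be conjugate to $B_n$. In fact $\lieb_n$ is not maximal solvable at all: already for $n=2$ one checks that $\lieb_2 + \fk\, x_1x_2\p_2$ is a strictly larger solvable subalgebra (its derived series terminates after finitely many steps, but its derived subalgebra contains both $x_2\p_2$ and $\p_2$, so it is not completely solvable). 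Thus the implication ``completely solvable $\lieb'\supseteq \lieb_n$ $\Rightarrow$ solvable $\lieb'\supseteq$ maximal solvable $\Rightarrow$ $\lieb' = \lieb_n$'' breaks at the second step.

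Your fallback for maximality --- take $D\notin\lieb_n$, pass to a $\liet_0$-weight component, and produce either an $\mathfrak{sl}_2$-triple or a non-nilpotent pattern in the derived algebra --- is exactly the paper's argument, and it is the one that must be carried out. The paper organises it into three cases for a monomial $X^{\co a(l)}\p_l\notin\lieb_n$: (1) some $a_s\neq 0$ with $s>l$, yielding the triple $\langle x_s\p_l,\, x_l\p_s,\, x_l\p_l-x_s\p_s\rangle$; (2) $a_l\ge 2$, yielding $\langle \p_l,\, x_l\p_l,\, x_l^2\p_l\rangle$; (3) $a_l=1$, yielding $x_l\p_l\in A_n^{[1]}$ and hence $\p_l\in (A_n^{[1]})^{[t]}$ for all $t$. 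Case~(3) is the one that genuinely uses \emph{complete} solvability rather than mere solvability, and it is precisely the phenomenon that distinguishes $\lieb_n$ from $B_n$.

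For nilpotency of $\lieb_n^{[1]}$, your instinct that a single monotone weight is needed is sound, but note that your intermediate claim ``each bracket strictly increases $|\co a|$'' is false even away from degree-zero factors (e.g.\ $[x_1\p_2,\,x_2\p_3]=x_1\p_3$). The paper sidesteps the search for such a function by tracking directly which basis monomials are excluded from $C_n^{[p^s]}$ as $s$ grows: at stage $s$ one has eliminated all $X^{\co a(i)}\p_i$ with $\co a(i)$ of the form $(a_1,\dots,a_s,p-1,\dots,p-1,0)$, and $C_n^{[p^{n-1}]}=0$. This explicit bookkeeping is both shorter and avoids the obstacle you flagged.
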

\begin{proof}
	Thanks to the definition, $  \lieb_n $ contains a maximal torus $ \mathfrak{t}_0 $. 
	
	One can check that $  \lieb_n^{[1]}=C_n $ by definition. Moreover,
\begin{align*}
C_n ^{[1]} \subseteq&\langle X^{\co {a}(i)}\p_i\in C_n \mid  \co {a}(i)\neq (p-1, p-1, \cdots, p-1, 0) \rangle,\\ 
C_n ^{[p]} \subseteq&\langle X^{\co {a}(i)}\p_i\in C_n \mid  \co {a}(i)\neq (a_1, p-1, \cdots, p-1, 0), a_1\in I \rangle,
\end{align*}
\begin{align*}
C_n ^{[p^s]} \subseteq&\langle X^{\co {a}(i)}\p_i\in C_n \mid  \co {a}(i)\neq (a_1,\cdots,a_s, p-1, \cdots, p-1, 0), a_1, \cdots, a_s \in I \rangle
\end{align*}
	for arbitrary $ s. $
	
	Therefore, $ C_n ^{[p^{n-1}]}=0 $ and $ \lieb_n $ is completely solvable.
	
	\textbf{Maximality}: $ \forall D \not\in  \lieb_n, $ denote $ A_n $ be the algebra generated by $  \lieb_n $ and $ D. $ 

	Suppose $ D=\sum D_i, $ where $ D_i\in W(n)_i. $ 
	There must be $ D_k=\sum_{l=1}^n f_l\p _l\in A_n\xg \lieb_n $ by $ \liet_0 $-action. After proper $ \p _j $'s actions, we can assume $ D_k=X^{\co {a}(l)}\p_l\in A_n\backslash  \lieb_n, $
	where $ \co {a}(l) =(\co {a}(l)_1,\cdots,\co {a}(l)_n). $ Namely, $ \co {a}(l)_s\neq 0 $ for some $ s\geq l. $
	
	\begin{enumerate}
		\item  If $ \co {a}(l)_s\neq 0 $ for some $ s> l, $ we have $ x_s\p _l\in A_n, $ and hence the semisimple subalgebra $ \langle x_s\p _l, x_l\p _s, x_l\p _l-x_s\p _s  \rangle$ lies in $ A_n. $
		
		\item If $ \co {a}(l)_s = 0 $ whence $ s> l, $ and $ \co {a}(l)_l \geq 2, $ then $ x_l^2\p _l\in A_n. $ The semisimple subalgebra $ \langle \p _l, x_l\p _l, x_l^2\p _l  \rangle$ lies in $ A_n. $
		
		\item If $ \co {a}(l)_s = 0 $ whence $ s> l, $ then $ \co {a}(l)_l =1, $ and $ x_ix_l\p _l\in A_n\backslash  \lieb_n,\ i<l. $ Moreover,
		$ x_l\p _l=[\p _i, x_ix_l\p _l]\in A_n^{[1]},\ \p _l\in (A_n^{[1]})^{[t]}  $ for all $ t\in\fn. $
	\end{enumerate}
	
	Hence, $ A_n^{[1]} $ is not nilpotent. Namely, $  \lieb_n $ is maximal.
\end{proof}

\begin{lemma}
	$  \lieb_q $ is a maximal completely solvable subalgebra for $ q=1,\cdots,n-1 $.
\end{lemma}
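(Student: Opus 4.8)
The plan is to verify separately the two defining properties of a maximal completely solvable subalgebra, following closely the proof of Lemma~\ref{bn}: (a) $\lieb_q$ is completely solvable, i.e. $\lieb_q^{[1]}$ is nilpotent; and (b) no completely solvable subalgebra of $W(n)$ properly contains $\lieb_q$. Two preliminary facts set the stage. First, $\liet_0\subset\lieb_q$ by construction, and a short computation gives $\lieb_q^{[1]}=C_q$: each monomial basis vector $X^{\co a(i)}\p_i$ of $C_q$ with $\co a(i)\neq 0$ has nonzero $\liet_0$-weight, hence lies in $[\liet_0,C_q]$, while the remaining ones are the $\p_i$ with $n-q<i\leq n$, equal to $-[x_i\p_i,\p_i]\in[\liet_0,C_q]$; conversely $[\lieb_q,\lieb_q]\subseteq C_q$, because $[C_q,C_q]$ carries no $\liet_0$-diagonal component --- producing one would force the two bracketed monomials to have exponent-sum $\epsilon_i+\epsilon_j$, a configuration excluded by the conditions $\Gamma_1,\Gamma_2,\Lambda_1,\Lambda_2$. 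Second, $\lieb_q$ genuinely interpolates between $\lieb_0$ on the first $n-q$ variables $x_1,\dots,x_{n-q}$ and a $\lieb_n$-type subalgebra on the last $q$ variables $x_{n-q+1},\dots,x_n$, the two being welded by the mixing term $Q_q$, whose elements all have positive degree in the first $n-q$ variables; this is the content of the decomposition $\lieb_q=\lieb_0(x_1,\dots,x_{n-q})\ds Q_q\ds\lieb_q(x_{n-q+1},\dots,x_n)$ of \S\ref{def of standard comp Borel}.

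For (a) I would prove $C_q=\lieb_q^{[1]}$ nilpotent by equipping it with a descending filtration for which it is filtered-nilpotent, whence $C_q^{[p^M]}=0$ for $M$ large. The filtration must combine three ingredients: the degree in $x_1,\dots,x_{n-q}$; the root-height grading underlying the nilpotency of strictly upper triangular matrices (needed because a bracket $[x_mw^{\co b}\p_i,x_{m'}w^{\co c}\p_{i'}]$ with $m<i\leq n-q$, $m'<i'\leq n-q$ need not raise the first-block degree, only the first-block root-height); and, in first-block degree $0$ --- where $C_q$ restricts to a copy of the $C_n$ of $W(q)$ --- the $\lieb_n$-type filtration coming from the $[p]$-power computation of Lemma~\ref{bn}. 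Granting that $C_q$ is filtered-nilpotent for this combined filtration, nilpotency follows since first-block degree and root-height are bounded and $W(n)_{\geq N}=0$ for $N$ large; so $\lieb_q$ is completely solvable. The bulk of the work is checking that every bracket among the pieces $\lieb_0(x_1,\dots,x_{n-q})$, $Q_q$, $\lieb_q(x_{n-q+1},\dots,x_n)$ and $\liet_0$ respects this filtration.

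For (b), let $D\notin\lieb_q$ and put $A_q:=\langle\lieb_q,D\rangle$. A subalgebra of a completely solvable Lie algebra is again completely solvable (its derived algebra embeds into a nilpotent one), so it suffices to show $A_q$ is \emph{not} completely solvable. The subalgebra $A_q$ is $\liet_0$-stable (as $\liet_0\subseteq A_q$), so $D$ splits into $\liet_0$-weight vectors lying in $A_q$; discarding the parts already in $\lieb_q$ and bracketing with suitable elements of $\lieb_q$ --- among them the $\p_{n-q+1},\dots,\p_n\in C_q$ and the $x_j\p_k$ lying in $\lieb_q$ --- one reduces, as in Lemma~\ref{bn}, to the case that $A_q$ contains a monomial vector $X^{\co a(l)}\p_l\notin\lieb_q$. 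If $l>n-q$, the analysis of Lemma~\ref{bn} (for the last-block situation, which is a copy of the $\lieb_n$-setting) applies. If $l\leq n-q$, using that only $\p_{n-q+1},\dots,\p_n$ and the upper triangular $x_j\p_k$ are available in $\lieb_q$, one reduces $X^{\co a(l)}\p_l$ to a monomial of one of the shapes $x_m\p_l$ ($l<m\leq n-q$), $x_{n-q+t}\p_l$ (or $\p_l$ itself), or $x_lx_{n-q+t}\p_l$, with $n-q<n-q+t\leq n$: in the first two, $A_q$ acquires a three-dimensional simple subalgebra $\langle x_m\p_l,x_l\p_m,x_l\p_l-x_m\p_m\rangle$, respectively $\langle x_{n-q+t}\p_l,x_l\p_{n-q+t},x_{n-q+t}\p_{n-q+t}-x_l\p_l\rangle$ (using $x_l\p_m$, resp. $x_l\p_{n-q+t}$, which lie in $\lieb_q$); in the third, bracketing with $x_{n-q+t}\p_{n-q+t}\in\liet_0$ and then with $\p_{n-q+t}\in C_q$ places $x_l\p_l$ in $(A_q^{[1]})^{[1]}$, after which $\on{ad}(x_l\p_l)$ --- invertible on $x_k\p_l$ for $k<l$, or on $x_l^2\p_l$ when $l=1$ --- forces $(A_q^{[1]})^{[t]}\neq 0$ for all $t\in\fn$, so $A_q^{[1]}$ is not nilpotent. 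In every case $A_q$ is not completely solvable, so $\lieb_q$ is maximal.

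I expect the main obstacle to be the combinatorial bookkeeping in both steps, concentrated on the mixing term $Q_q$ and its hybrid conditions $\Gamma_1,\Gamma_2$. In (a) one must verify, case by case on exponent tuples, that the combined first-block-degree / root-height / $\lieb_n$-type filtration is respected by all brackets among $\lieb_0(x_1,\dots,x_{n-q})$, $Q_q$, $\lieb_q(x_{n-q+1},\dots,x_n)$ and $\liet_0$ --- the delicate point being that neither the first-block degree nor the root-height alone filters $C_q$, so the right blend of the two has to be pinned down. In (b) one must check that the reduction of a weight vector outside $\lieb_q$ always terminates in one of the shapes above; the genuinely new feature, absent from Lemma~\ref{bn}, is the mixed shape $x_lx_{n-q+t}\p_l$, whose treatment rests on the fact that $\p_{n-q+t}\in C_q$ while $\p_l\notin\lieb_q$ for $l\leq n-q$. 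Both are finite checks in the same spirit as Lemma~\ref{bn}, but with more cases.
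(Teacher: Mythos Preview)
Your proposal is correct and follows essentially the same two-step strategy as the paper. For nilpotency of $C_q=\lieb_q^{[1]}$, the paper carries out the explicit iterated-bracket estimates (e.g.\ $C_q^{[sp^q]}\subseteq C_{q2}$ and eventually $C_q^{[t+sp^q+p^n]}=0$, invoking the already-known nilpotency indices $s,t$ of $\lieb_0^{[1]}(x_1,\dots,x_{n-q})$ and $\lieb_q^{[1]}(x_{n-q+1},\dots,x_n)$), which is exactly the filtration-by-first-block-degree/root-height/$\lieb_n$-type data you describe, only made concrete; for maximality the paper gives the same case analysis but more tersely (``either $\p_l\in(A_q^{[1]})^{[m]}$ for all $m$, or one of the two $\mathfrak{sl}_2$-triples $\langle x_s\p_l,x_l\p_s,x_l\p_l-x_s\p_s\rangle$, $\langle \p_l,x_l\p_l,x_l^2\p_l\rangle$ appears''), whereas you spell out the genuinely new mixed case $x_lx_{n-q+t}\p_l$ and dispose of it by the non-nilpotency argument---this is a welcome elaboration rather than a different method.
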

\begin{proof}
	Thanks to the definition, $  \lieb_q $ contains a maximal torus $ \mathfrak{t}_0 $ and $  \lieb_q^{[1]}=C_q. $
	
	Recall that $ \lieb_q=\liet_0\ds C_{q1}\ds C_{q2}=\lieb_0(x_1,\cdots,x_{n-q})\ds\liet_0\ds \lieb_q(x_{n-q+1},\cdots,x_n), $ where $ C_{qi} $ corresponding to $ \Lambda_i $ as section 3.2 ($ i=1,2 $). Then the followings holds.	
\begin{align*}
C_q^{[1]}  \subseteq&C_{q2}\ds \langle u^{\co {a}(i)}w^{\co {b}(i)}\p _i\in C_{q1}  \mid  u^{\co {a}(i)}w^{\co {b}(i)}\p _i\neq x_{n-q-1} x_n^{p-1} \p _{n-q} \rangle;\\ 
C_q^{[p]}  \subseteq& C_{q2}\ds \langle u^{\co {a}(i)}w^{\co {b}(i)}\p _i\in C_{q1}  \mid  u^{\co {a}(i)}w^{\co {b}(i)}\p _i\neq x_{n-q-1} x_n^l \p _{n-q},l=0,\cdots,p-1 \rangle;\\
C_q^{[p^q]}  \subseteq& C_{q2}\ds \langle u^{\co {a}(i)}w^{\co {b}(i)}\p _i\in C_{q1}  \mid  \co {a}(i)\neq \epsilon_{n-q-1} \rangle.
\end{align*}
	Suppose $ ( \lieb_0^{[1]}(x_1,\cdots,x_{n-q}))^{[s]}=0$ and $ ( \lieb_q^{[1]}(x_{n-q+1},\cdots,x_n))^{[t]}=0. $ Then 
\begin{align*}
C_q^{[sp^q]}  \subseteq& C_{q2}, \\ 
C_q^{[t+sp^q]}\subseteq& \langle u^{\co {c}(j)}w^{\co {d}(j)}\p _j\in C_{q2}  \mid  \co {c}(j)\neq 0 \rangle;\\ 
C_q^{[t+sp^q+(n-q)p^q]}\subseteq &\langle u^{\co {c}(j)}w^{\co {d}(j)}\p _j\in C_{q2} \mid  \co {c}(j)\neq 0,\epsilon_1,\cdots, \epsilon_{n-q} \rangle;\\
C_q^{[t+sp^q+p^{n-q}p^q]}=  \subseteq& C_q^{[t+sp^q+p^n]}=0
\end{align*}
    Hence, $ C_q $ is nilpotent and $ \lieb_q $ is completely solvable.
	
	\textbf{Maximality}: Similar to the maximality of  $  \lieb_n, $ suppose $ A_q $ contains $  \lieb_q $ as a proper subset, then either there is an element $ \p _l $ lies in $ (A_q^{[1]})^{[m]} $ for all $ m\in\fn $ which violates the nilpotence of $ A_q^{[1]} $ or there exists a semisimple subalgebra in $ A_q, $ say either $ \langle x_s\p _l, x_l\p _s, x_l\p _l-x_s\p _s  \rangle (l\neq s)$  or $ \langle \p _l, x_l\p _l, x_l^2\p _l  \rangle.$
\end{proof}

\subsection{Conjugation classes of maximal conpletely solvable subalgebras}
By using similar idea and methods of \cite{S7}, we will classify conjugation classes of all maximal complete solvable subalgebras which are torus graded. 

Let $ \lieh $ be any subalgebra of $ W(n) $. Similar to \cite{S7}, define
$$\on{r}( \lieh) := \text{max}\{r \mid  \text{there\ exists}\ \sigma\in \text{Aut}(W(n))\ \text{such\ that }\ \sigma(t_r) \subset  \lieh\},$$
and $ \on{r}(\lieh)=-1 $ if $ \lieh $ does not contain any maximal torus.

The following lemma is easy to verify.
\begin{lemma} Keep notations as above. For every $ r=0,\cdots,n, $ 
	\begin{enumerate}
		\item  $ \liet_i\subseteq\lieb_r $ if and only if $ 0\leq i\leq r. $
		\item 		$ \on{r}( \lieb_r) = r. $
	\end{enumerate}
\end{lemma}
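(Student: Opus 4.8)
The plan is to reduce both statements to the standard $\fz$-grading of $W(n)$. The one point that needs genuine checking, which I would deal with first, is the degree $-1$ component of each $\lieb_r$. Each $\lieb_r$ is spanned by homogeneous elements (monomials $X^{\co a(i)}\p_i$ together with the degree-$0$ elements coming from $\lieb$ and $\liet_0$), hence is a graded subspace of $W(n)$, and I claim
$$(\lieb_r)_{-1}=\langle\,\p_l\ \mid\ n-r+1\leq l\leq n\,\rangle ,$$
a space of dimension $r$. For $r=0$, $\lieb_0=\lieb\ds W(n)_{\geq 1}$ has no degree $-1$ part. For $r=n$, $\lieb_n=\liet_0\ds C_n$ and every $\p_l$ lies in $C_n$ (take $\co a(l)=0$ in the defining set, which has the required shape $(a_1,\cdots,a_{l-1},0,\cdots,0)$). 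For $1\leq r\leq n-1$ one reads off $\Lambda_1\cup\Lambda_2$ with $\co a=\co b=0$: membership in $\Lambda_1$ would force $\abs{\co a(l)}\geq1$ and fails, while membership in $\Lambda_2$ requires $n-r+1\leq l\leq n$ and imposes nothing further on $\co b(l)$ once $\co a(l)=0$; so exactly the $\p_l$ with $l>n-r$ occur. This bookkeeping with the index sets is the only laborious step.

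For part (1): if $0\leq i\leq r$, the generators of $\liet_i$ are $x_l\p_l$ for $l\leq n-i$, which lie in $\liet_0\subseteq\lieb_r$, and $(1+x_l)\p_l=\p_l+x_l\p_l$ for $l>n-i$, where $x_l\p_l\in\liet_0\subseteq\lieb_r$ and $\p_l\in\lieb_r$ because $l>n-i\geq n-r$; hence $\liet_i\subseteq\lieb_r$. Conversely, if $\liet_i\subseteq\lieb_r$ with $i\geq1$, then $(1+x_{n-i+1})\p_{n-i+1}\in\lieb_r$, and subtracting $x_{n-i+1}\p_{n-i+1}\in\liet_0\subseteq\lieb_r$ gives $\p_{n-i+1}\in\lieb_r$, so by the displayed identity $n-i+1\geq n-r+1$, i.e. $i\leq r$; the case $i=0$ is trivial. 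In particular $\liet_r\subseteq\lieb_r$.

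For part (2): since $\liet_r\subseteq\lieb_r$ (take $\sigma=\on{id}$) we get $\on{r}(\lieb_r)\geq r$. For the reverse inequality, suppose $\sigma\in\on{Aut}(W(n))=G$ satisfies $\sigma(\liet_j)\subseteq\lieb_r$; I must show $j\leq r$. Set $\liet':=\sigma(\liet_j)$, a maximal torus, so $\on{dim}\liet'=n$. Projecting $W(n)$ onto its degree $-1$ component restricts (using that $\lieb_r$ is graded and $-1$ is the lowest degree) to a linear map $\liet'\to(\lieb_r)_{-1}$ with kernel $\liet'\cap W(n)_{\geq0}$, so
$$\on{dim}\big(\liet'\cap W(n)_{\geq0}\big)\ \geq\ \on{dim}\liet'-\on{dim}(\lieb_r)_{-1}\ =\ n-r .$$
On the other hand, a direct computation with $\liet_j=\langle z_1\p_1,\cdots,z_n\p_n\rangle$ gives $\liet_j\cap W(n)_{\geq0}=\langle x_1\p_1,\cdots,x_{n-j}\p_{n-j}\rangle$, of dimension $n-j$; and by Demuskin's theorem \cite{De1} the quantity $\on{dim}(\,\cdot\,\cap W(n)_{\geq0})$ is constant on $G$-orbits of maximal tori, hence equals $n-j$ on $\liet'$. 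Comparing the two estimates gives $n-j\geq n-r$, i.e. $j\leq r$. Therefore $\on{r}(\lieb_r)\leq r$, and together with the first inequality $\on{r}(\lieb_r)=r$.

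The main (essentially the only) obstacle is the first paragraph: extracting $(\lieb_r)_{-1}$ correctly from the combinatorial definitions of $C_r$, $C_n$ and $\lieb_0$. Once that identity is in hand, part (1) is a one-line check on the generators of $\liet_i$ and part (2) is the graded-projection dimension count combined with the quotation of Demuskin's conjugacy theorem — both routine.
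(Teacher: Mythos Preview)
Your proof is correct. The paper gives no argument at all for this lemma (it just says ``easy to verify''), so there is nothing to compare against; your approach---computing $(\lieb_r)_{-1}$ explicitly from the index sets, checking the generators of $\liet_i$ directly for part~(1), and for part~(2) bounding $\dim(\liet'\cap W(n)_{\geq0})$ via the projection to degree $-1$ combined with Demuskin's invariance---is a clean and complete way to fill in the details. One small remark: in part~(2) you do not actually need the full strength of Demuskin's theorem, since $G=G_0\ltimes U$ stabilises the filtration $W(n)_{\geq0}$ (both $G_0$ and $U$ do, by their definitions), so $\dim(\sigma(\liet_j)\cap W(n)_{\geq0})=\dim(\liet_j\cap W(n)_{\geq0})$ is automatic; but quoting Demuskin is certainly valid.
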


The arguments of \cite[lemma 4.2-4.4]{S7} still work in our case. However, one need to check that all of the algebras involved are completely solvable not only solvable. Thus we have

\begin{proposition}\label{conj class of complete Borel}
	Assume $ p>3. $ Let $ \lieb $ be a torus graded and maximal completely solvable subalgebra of $ W(n) $ with $  \on{r}( \lieb)=r $. Then $ \lieb $ conjugates to $  \lieb_r $ with respect to $ G. $ 
	
	In particular, each $ \lieb_r( r=0,\cdots,n) $ is torus graded and maximal complete solvable.
\end{proposition}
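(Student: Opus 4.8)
The plan is to follow the strategy of \cite{S7}, adapted so that ``solvable'' is everywhere replaced by ``completely solvable.'' The previous three lemmas already establish that each $\lieb_r$ ($r=0,\dots,n$) is a maximal completely solvable subalgebra; that they are torus graded will follow once we know the classification, since $\lieb_r$ is manifestly $\fz(\liet_0)$-graded and in fact $\liet_0$-graded (each spanning monomial $Z^{\co a(i)}\p_i$ is an eigenvector for $\operatorname{ad}(z_j\p_j)$), and conjugation of any maximal torus inside $\lieb_r$ back to some $\liet_r$ produces a $\liet_r$-graded form by the uniqueness part; alternatively, for the standard representatives the torus is already $\liet_0$ and there is nothing to do, and the general torus-graded claim in the ``in particular'' clause is then a formal consequence together with the fact that $\operatorname{r}(\lieb_r)=r$ from the preceding lemma.

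The heart of the matter is the conjugacy statement. First I would fix $\lieb$ torus graded and maximal completely solvable with $\operatorname{r}(\lieb)=r$, and after applying a suitable $\sigma\in G$ assume $\liet_r\subseteq\lieb$ and that $\lieb$ is $\liet_r$-graded, so $\lieb=\bigoplus_{\alpha}\lieb_\alpha^{(\liet_r)}$. The argument then proceeds by induction on $r'=n-\dim(\lieb\cap W(n)_{-1})$, exactly as in the omitted Lemma~4.2--4.4 arguments reproduced in the \texttt{iffalse} block. The base case $r'=0$: here $\liet_0\subseteq\lieb$, one shows $\lieb=\sum_i\lieb_i$ is genuinely graded, and then the trichotomy (a pair $x_i\p_j,x_j\p_i$; a single $x_i^2\p_i$; or $\p_{s_n},x_{s_n}\p_{s_n}\in\lieb^{[1]}$) is invoked: the first two produce an $\mathfrak{sl}_2$-triple contradicting solvability, and — this is the point where completeness rather than mere solvability is essential — the third gives $\p_{s_n}\in(\lieb^{[1]})^{[t]}$ for all $t$, contradicting nilpotence of $\lieb^{[1]}$. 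This forces $\lieb$ into the permuted form $\sum_i\sum_{\co a(i)}\fk X^{\co a(i)}\p_{s_i}$ with $\co a(i)=(a_1,\dots,a_{i-1},0,\dots)$, hence (after a coordinate permutation automorphism) a completely solvable subalgebra of $\lieb_n$, so $\lieb=\lieb_n$ by maximality. For the inductive step one splits $\lieb$ along $\lieb\cap W(x_1,\dots,x_r)$ and $\lieb\cap W(x_{r+1},\dots,x_n)$ and argues in two cases according to whether $B_2$ contains a nilpotent element: if not, one enlarges $\lieb$ to $\bar\lieb$ by ``completing'' the $(1+x_j)$-exponents, checks $\bar\lieb$ is again completely solvable (the bracket computation shows $[\bar D,\bar D']$ differs from $[D,D']$ only by an invertible factor, so nilpotence of $\lieb^{[1]}$ transfers to $\bar\lieb^{[1]}$), and then $\bar\lieb+\fk\p_n$ is still completely solvable and strictly larger — contradiction; if $B_2$ does contain a nilpotent element $X$, the automorphisms $\Phi,\Psi$ of \cite{S7} straighten it so that $\Psi\Phi(\lieb)\supseteq\liet_n$ with $(-1)$-part of dimension $r+1$, and the inductive hypothesis finishes. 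The intermediate values $0<r<n$ are handled by the analogue of \cite[Lemma~4.4]{S7}, reducing to the two extreme cases after peeling off the split factor $\lieb_0(x_1,\dots,x_{n-r})$.

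The main obstacle is not any single new idea but the bookkeeping needed to verify that every subalgebra constructed or enlarged in the course of the \cite{S7} argument is \emph{completely} solvable and not merely solvable. Concretely: (i) in the base case, the third alternative of the trichotomy must be shown to violate nilpotence of the derived algebra, which requires the explicit induction $\p_\ell\in(A^{[1]})^{[t]}$ already used in Lemma~\ref{bn}; (ii) in the inductive step, one must confirm that passing from $\lieb$ to $\bar\lieb$ and then to $\bar\lieb+\fk\p_n$ preserves nilpotence of the derived algebra, which is exactly the content of the displayed bracket identities; (iii) one must check the automorphisms $\Phi,\Psi$ borrowed from \cite{S7} interact correctly with the $[p]$-structure so that complete solvability is an invariant of the conjugation. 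Once these three checks are in place the proof is a line-by-line transcription of \cite[Lemmas~4.2--4.4]{S7}, and I would simply remark this rather than reproduce the whole argument. Finally, the ``in particular'' clause follows: $\operatorname{r}(\lieb_r)=r$ by the preceding lemma, the $\lieb_r$ are maximal completely solvable by the three lemmas above, and they are torus graded because each is $\liet_0$-graded and any other maximal torus inside it is $G$-conjugate to the appropriate $\liet_s$ with the conjugate of $\lieb_r$ being $\liet_s$-graded, as the classification just established shows it must coincide with $\lieb_s$ up to that conjugation.
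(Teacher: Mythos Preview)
Your proposal is correct and matches the paper's approach exactly: the paper's proof is a one-line deferral to \cite[Lemmas~4.2--4.4]{S7} together with the remark that one must verify complete solvability (not merely solvability) at each step, and you have correctly identified precisely the three places where this extra check is needed and what each amounts to. Your exposition is in fact more detailed than the paper's (you have essentially reconstructed the commented-out argument), with the minor organizational quirk that you present the $\operatorname{r}(\lieb)=n$ induction as though it were the general case before relegating $0<r<n$ and (implicitly) $r=0$ to a closing sentence; this does no harm, since that is exactly how the underlying argument in \cite{S7} is structured.
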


\begin{corollary}
	Let $ \lieb $ be a torus graded and maximal completely solvable subalgebra of $ W(n), $ then $$ \on{r}( \lieb)=\mathsf{dim}(\mathsf{pr}_0( \lieb)), $$ where $ \mathsf{pr}_0: W(n)\sur W(n)/W(n)_{\geq 0} $ is a linear map.
\end{corollary}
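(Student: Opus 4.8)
The plan is to reduce the statement to the representatives $ \lieb_r $ by invoking the classification in Proposition \ref{conj class of complete Borel}. Put $ r:=\on{r}(\lieb) $, so that $ \lieb=g\cdot\lieb_r $ for some $ g\in G $. The invariant $ \on{r}(-) $ is plainly $ G $-invariant from its definition, since $ \on{Aut}(W(n))=G $; and $ \on{r}(\lieb_r)=r $ by the lemma immediately preceding Proposition \ref{conj class of complete Borel}. Thus it suffices to show, first, that $ \dim\mathsf{pr}_0(-) $ is $ G $-invariant, and second, that $ \dim\mathsf{pr}_0(\lieb_r)=r $ for every $ r=0,\dots,n $.

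For the $ G $-invariance of $ \dim\mathsf{pr}_0(-) $ I would write $ g=g_0u $ with $ g_0\in G_0 $, $ u\in U $. Being homogeneous, $ g_0 $ stabilizes $ W(n)_{\geq 0} $ and hence induces a linear automorphism $ \bar{g}_0 $ of $ W(n)/W(n)_{\geq 0} $ with $ \mathsf{pr}_0\circ g_0=\bar{g}_0\circ\mathsf{pr}_0 $. For $ u\in U $ the graded assumption (Definition \ref{grading assumption}) gives $ u\cdot x-x\in W(n)_{\geq i+1} $ when $ x\in W(n)_i $: applied with $ i\geq 0 $ it shows $ u $ preserves $ W(n)_{\geq 0} $, and applied with $ i=-1 $ it shows $ u\cdot x-x\in W(n)_{\geq 0} $ for $ x\in W(n)_{-1} $. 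Splitting an arbitrary $ x\in W(n) $ along $ W(n)=W(n)_{-1}\oplus W(n)_{\geq 0} $ one gets $ \mathsf{pr}_0(u\cdot x)=\mathsf{pr}_0(x) $, i.e. $ \mathsf{pr}_0\circ u=\mathsf{pr}_0 $. Hence $ \mathsf{pr}_0(\lieb)=\mathsf{pr}_0(g\cdot\lieb_r)=\bar{g}_0\big(\mathsf{pr}_0(\lieb_r)\big) $, which has the same dimension as $ \mathsf{pr}_0(\lieb_r) $.

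It then remains to evaluate $ \dim\mathsf{pr}_0(\lieb_r) $ directly from the description in \S\ref{def of standard comp Borel}, using $ W(n)/W(n)_{\geq 0}\cong W(n)_{-1}=\langle\p_1,\dots,\p_n\rangle $. The summand $ \liet_0\subseteq W(n)_0 $ contributes nothing, so only $ C_r $ (and, for $ 1\le r\le n-1 $, its part $ Q_r $) matters. Every generator there has the form $ u^{\co{a}(i)}w^{\co{b}(i)}\p_i $, where $ u^{\co{a}(i)} $ is a monomial in the honest indeterminates $ x_1,\dots,x_{n-r} $ of degree $ \abs{\co{a}(i)} $ and $ w^{\co{b}(i)}=\prod_j(1+x_j)^{b_j} $ has constant term $ 1 $ and otherwise only positive-degree terms; such an element lies in $ W(n)_{\geq 0} $ as soon as $ \abs{\co{a}(i)}\geq 1 $, hence maps to $ 0 $ under $ \mathsf{pr}_0 $. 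The defining conditions in $ \Gamma_1,\Gamma_2 $ and $ \Lambda_1 $ all force $ \abs{\co{a}(i)}\geq 1 $, whereas the only generators with $ \co{a}(i)=0 $ come from $ \Lambda_2 $, for which $ n-r+1\le i\le n $ and $ \mathsf{pr}_0\big(w^{\co{b}(i)}\p_i\big)=\p_i $. Therefore $ \mathsf{pr}_0(\lieb_r)=\langle\p_{n-r+1},\dots,\p_n\rangle $ for all $ r=0,\dots,n $ (the zero space for $ r=0 $, all of $ W(n)_{-1} $ for $ r=n $), so $ \dim\mathsf{pr}_0(\lieb_r)=r=\on{r}(\lieb) $, as required.

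The one step needing genuine attention is the last paragraph: one must be sure that the factors $ 1+x_j $ occurring in the generators of $ \lieb_r $ produce no spurious degree-$ (-1) $ contributions. This is precisely what the bookkeeping above guarantees — outside the $ \Lambda_2 $ case every such factor is multiplied by a genuine monomial $ u^{\co{a}(i)} $ with $ \abs{\co{a}(i)}\geq 1 $ — and everything else is formal.
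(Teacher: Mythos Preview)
Your argument is correct and follows essentially the same route as the paper's proof: reduce to the representatives $\lieb_r$ via Proposition~\ref{conj class of complete Borel}, check $\mathsf{pr}_0(\lieb_r)=\langle\p_{n-r+1},\dots,\p_n\rangle$, and verify that $\dim\mathsf{pr}_0(-)$ is $G$-invariant by splitting $G=G_0\ltimes U$ and treating the two factors separately. The paper records exactly these two facts (that $\sigma\in U$ leaves $\mathsf{pr}_0$ unchanged while $g\in G_0$ preserves $\dim\mathsf{pr}_0$) without further comment; you have simply supplied the details.

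One small remark on notation: in \S\ref{def of standard comp Borel} the symbols $u,w$ are used without being defined there, and the internal evidence of the paper (e.g.\ the proof of the lemma on $\lieb_q$, where a generator is written as $x_{n-q-1}x_n^{p-1}\p_{n-q}$) indicates $w_j=x_{n-q+j}$ rather than $1+x_{n-q+j}$. This does not affect your conclusion, since the span over all admissible $\un{b}(i)$ is the same either way, but under the $w_j=x_j$ reading the degree~$-1$ contributions from $\Lambda_2$ come only from $\un{b}(i)=0$, which is always admissible; so the identification $\mathsf{pr}_0(\lieb_r)=\langle\p_{n-r+1},\dots,\p_n\rangle$ stands unchanged.
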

\begin{proof}
	$ \mathsf{pr}_0( \lieb_r)=<\p _{i} \mid  i=n-r+1,\cdots,n>, $ hence $ \on{r}( \lieb_r)=r=\mathsf{dim}(\mathsf{pr}_0( \lieb_r)). $
	
	Note that $ \mathsf{pr}_0(\sigma( \lieb_r))=\mathsf{pr}_0( \lieb_r) $ and $ \mathsf{dim}(\mathsf{pr}_0(g( \lieb)))=\mathsf{dim}(\mathsf{pr}_0( \lieb)) $ for all $ \sigma\in U, $ $ g\in G_0. $ Corollary holds.
\end{proof}

\begin{remark}
	Thanks to proposition \ref{conj class of complete Borel}, the conjugation class of $ \lieb $ could be determined by $ \mathsf{dim}(\mathsf{pr}_0( \lieb)). $	
\end{remark}

Since $ \mathsf{dim}(\mathsf{pr}_0( \lieb_r))=r,$
the following holds.
\begin{corollary}
	$  \lieb_0,\cdots, \lieb_n  $ are not conjugate with each other.
\end{corollary}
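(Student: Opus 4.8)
The plan is to exhibit a single numerical invariant that separates the $\lieb_r$, namely the dimension $\mathsf{dim}(\mathsf{pr}_0(\cdot))$ of the image under the projection $\mathsf{pr}_0 : W(n) \sur W(n)/W(n)_{\geq 0}$. The whole argument reduces to two facts already established above: that this dimension is constant along $G$-orbits, and that it evaluates to $r$ on $\lieb_r$.

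First I would recall from the proof of the preceding corollary that $\mathsf{pr}_0(\sigma(\lieb)) = \mathsf{pr}_0(\lieb)$ for every $\sigma \in U$ and that $\mathsf{dim}(\mathsf{pr}_0(g_0(\lieb))) = \mathsf{dim}(\mathsf{pr}_0(\lieb))$ for every $g_0 \in G_0$. Since the automorphism lemma gives $G = G_0 \ltimes U$, an arbitrary $g \in G$ factors as $g = g_0 u$ with $g_0 \in G_0$ and $u \in U$; combining the two facts yields $\mathsf{dim}(\mathsf{pr}_0(g(\lieb))) = \mathsf{dim}(\mathsf{pr}_0(\lieb))$. Hence $\mathsf{dim}(\mathsf{pr}_0(\cdot))$ is a $G$-invariant on the class of torus-graded maximal completely solvable subalgebras.

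Next I would record the computation $\mathsf{pr}_0(\lieb_r) = \langle \p_i \mid i = n-r+1, \ldots, n\rangle$ already carried out in the preceding corollary, so that $\mathsf{dim}(\mathsf{pr}_0(\lieb_r)) = r$. Consequently, were $\lieb_r$ and $\lieb_s$ conjugate under $G$, the $G$-invariance would force $r = \mathsf{dim}(\mathsf{pr}_0(\lieb_r)) = \mathsf{dim}(\mathsf{pr}_0(\lieb_s)) = s$. As $r, s$ range over the distinct integers $0, \ldots, n$, this shows the $\lieb_r$ are pairwise non-conjugate.

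There is no genuine obstacle here: the statement is a formal consequence of the $G$-invariance of $\mathsf{dim}(\mathsf{pr}_0(\cdot))$ together with its separating values $r$. The only point demanding any attention is the factorization $g = g_0 u$, which relies on the semidirect structure $G = G_0 \ltimes U$ and on the $G_0$- and $U$-actions each preserving the relevant dimension. Alternatively, I would note that one may dispense with $\mathsf{pr}_0$ altogether and argue directly via the rank function $\on{r}(\cdot)$: it is manifestly conjugation-invariant, since $\sigma(\liet_r) \subset \lieh$ holds if and only if $(g\sigma)(\liet_r) \subset g\cdot\lieh$, and it satisfies $\on{r}(\lieb_r) = r$ by the lemma, which separates the classes just as well.
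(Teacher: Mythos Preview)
Your proof is correct and follows precisely the paper's approach: the corollary is deduced from the preceding one by observing that $\mathsf{dim}(\mathsf{pr}_0(\cdot))$ is a $G$-invariant taking the distinct values $0,\ldots,n$ on $\lieb_0,\ldots,\lieb_n$. You have simply made explicit the factorization $g=g_0u$ and added the alternative via $\on{r}(\cdot)$, but the substance is identical.
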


Moreover, we have the following classification theorem.

\begin{theorem}\label{main thm of conjugation class}
	Assume $ p>3. $ There are $ (n+1) $ conjugacy classes of torus graded and maximal completely solvable subalgebras of $ W(n)$ with representatives $\{ \lieb_i\mid i = 0, \cdots , n, \}. $
\end{theorem}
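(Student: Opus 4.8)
The plan is to deduce the theorem directly from the results already established in the excerpt, so that the proof is essentially a matter of assembling pieces. The key point is that Proposition~\ref{conj class of complete Borel} gives us two complementary facts: first, every torus graded and maximal completely solvable subalgebra $\lieb$ of $W(n)$ with $\on{r}(\lieb)=r$ is $G$-conjugate to $\lieb_r$; second, each of the $n+1$ subalgebras $\lieb_0,\dots,\lieb_n$ is itself torus graded and maximal completely solvable. The Corollary preceding the theorem (together with the invariance of $\mathsf{dim}(\mathsf{pr}_0(\,\cdot\,))$ under $G$) shows that $\lieb_0,\dots,\lieb_n$ are pairwise non-conjugate. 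Combining these, the $n+1$ subalgebras $\lieb_i$ form a complete, irredundant set of representatives.

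In more detail, I would argue as follows. \textbf{Exhaustiveness.} Let $\lieb$ be an arbitrary torus graded and maximal completely solvable subalgebra of $W(n)$. Since $\lieb$ is torus graded it contains a maximal torus of $W(n)$, so $\on{r}(\lieb)\geq 0$; as $W(n)$ has maximal tori only of the types $\liet_r$ with $0\leq r\leq n$, we have $\on{r}(\lieb)=r$ for some $r\in\{0,1,\dots,n\}$. By Proposition~\ref{conj class of complete Borel}, $\lieb$ is $G$-conjugate to $\lieb_r$. Hence every such subalgebra lies in the $G$-orbit of one of the $\lieb_i$. \textbf{Non-redundancy.} By the Corollary, $\on{r}(\lieb_r)=r=\mathsf{dim}(\mathsf{pr}_0(\lieb_r))$, and $\mathsf{dim}(\mathsf{pr}_0(\,\cdot\,))$ is constant on $G$-orbits (it is fixed under $U$ because $\mathsf{pr}_0(\sigma(\lieb))=\mathsf{pr}_0(\lieb)$ for $\sigma\in U$, and its dimension is preserved under $G_0$); therefore $\lieb_i$ and $\lieb_j$ conjugate would force $i=j$. \textbf{Well-definedness.} Proposition~\ref{conj class of complete Borel} also records that each $\lieb_i$ is itself torus graded and maximal completely solvable, so each orbit is genuinely represented. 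Putting the three points together yields exactly $n+1$ conjugacy classes with representatives $\{\lieb_i\mid i=0,\dots,n\}$.

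I do not expect any serious obstacle in this final step, since all of the substantive work — establishing complete solvability and maximality of the $\lieb_q$ (the three lemmas in \S\ref{def of standard comp Borel}), and proving the conjugacy statement of Proposition~\ref{conj class of complete Borel} by the inductive argument adapted from \cite{S7} — has already been carried out. The one place where a little care is needed is making sure the hypothesis "$\lieb$ contains a maximal torus'' is genuinely available: this is where the definition of \emph{torus graded} is used, and I would spell out explicitly that a torus graded subalgebra contains some maximal torus and hence has a well-defined invariant $\on{r}(\lieb)\in\{0,\dots,n\}$ before invoking the Proposition. With that remark in place, the theorem follows immediately, and the proof can be stated in a few lines referring back to Proposition~\ref{conj class of complete Borel} and the two preceding corollaries.
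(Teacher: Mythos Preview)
Your proposal is correct and matches the paper's approach exactly: the paper states the theorem immediately after Proposition~\ref{conj class of complete Borel} and its two corollaries without giving a separate proof, so it is intended as the straightforward combination of those results that you describe. Your write-up is in fact more explicit than the paper, which simply records the theorem as the evident consequence.
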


\subsection{Completely solvable subalgebras with standard grading}
Recall that a subalgebra $ \lieh $ is of standard grading if $ \lieh=\sum_i \lieh\cap W(n)_i. $

By definition, all $ \lieb_r $'s are torus graded and maximal completely solvable subalgebras of $ W(n) $ with standard grading. 

\begin{theorem}\label{homo}
	Assume $ p>3. $ Let $ \lieb $ be a torus graded and maximal completely solvable subalgebra of $ W(n) $ with standard grading and $  \on{r}( \lieb)=r $. Then $ \lieb $ conjugates to $  \lieb_r $ with respect to $ G. $ 
	
	Namely, there are $ (n+1) $ conjugacy classes of torus graded and maximal completely solvable subalgebras of $ W(n) $ with standard grading. The representatives are $ \{\lieb_0,\cdots,\lieb_n\}. $
\end{theorem}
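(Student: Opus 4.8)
The plan is to read the statement off from Theorem \ref{main thm of conjugation class} (equivalently Proposition \ref{conj class of complete Borel}), supplemented by two elementary observations: that each model subalgebra $\lieb_r$ already belongs to the class in question, and that these models are pairwise non-conjugate. The hypothesis ``standard grading'' will be used only to exhibit the representatives inside this class; the conjugacy itself is supplied by the general classification, which carries no grading hypothesis.

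First I would verify that, for every $r=0,\dots,n$, the subalgebra $\lieb_r$ of \S\ref{def of standard comp Borel} is a torus graded, maximal completely solvable subalgebra of $W(n)$ carrying the standard grading. Maximal complete solvability is exactly the content of the lemmas of \S\ref{def of standard comp Borel} (in particular Lemma \ref{bn}), and torus gradedness is part of Proposition \ref{conj class of complete Borel}. The standard grading is read off the explicit spanning sets: $\lieb_0=\lieb\ds W(n)_{\geq1}$ satisfies $\lieb_0\cap W(n)_0=\lieb$, $\lieb_0\cap W(n)_i=W(n)_i$ for $i\geq1$, and $\lieb_0\cap W(n)_{-1}=0$; while $\lieb_n=\liet_0\ds C_n$ and $\lieb_q=\liet_0\ds C_q$ are spanned by the diagonal fields $x_i\p_i$ together with monomial vector fields $X^{\co{a}}\p_i$ ($X^{\co{a}}$ a monomial in $x_1,\dots,x_n$), each homogeneous of degree $\abs{\co{a}}-1$ for the standard grading. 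Hence $\lieb_r=\bds_i(\lieb_r\cap W(n)_i)$, so $\{\lieb_0,\dots,\lieb_n\}$ is a family of torus graded, maximal completely solvable, standard-graded subalgebras of $W(n)$.

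Next, given $\lieb$ as in the statement with $\on{r}(\lieb)=r$, I would discard the grading hypothesis and apply Theorem \ref{main thm of conjugation class} directly, obtaining $g\in G$ with $g\cdot\lieb=\lieb_r$; this is the asserted conjugacy. To see that the $n+1$ classes are genuinely distinct, I would invoke the corollaries following Proposition \ref{conj class of complete Borel}: $\on{r}(\lieb_r)=r=\mathsf{dim}(\mathsf{pr}_0(\lieb_r))$, and since these integers are pairwise distinct the subalgebras $\lieb_0,\dots,\lieb_n$ lie in $n+1$ different $G$-orbits. Combining, the torus graded, maximal completely solvable subalgebras of $W(n)$ with standard grading fall into exactly $n+1$ conjugacy classes, with representatives $\lieb_0,\dots,\lieb_n$.

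Given Theorem \ref{main thm of conjugation class}, the only real work above is the homogeneity bookkeeping in the second paragraph, which is routine, so there is essentially no obstacle. If one instead wanted a self-contained proof avoiding Theorem \ref{main thm of conjugation class}, one would re-run the reduction of \cite[Lemmas 4.2--4.4]{S7} in the standard-graded setting; there the main difficulty is not the reduction scheme itself but checking that the algebras built along the way --- obtained by adjoining an element to a maximal subalgebra --- stay completely solvable rather than merely solvable, exactly the point already handled in the lemmas of \S\ref{def of standard comp Borel} (adjoining an $x_s\p_l$- or $x_l^2\p_l$-type element produces an $\mathfrak{sl}_2$-triple, while adjoining an $x_ix_l\p_l$-type element with $i<l$ forces $\p_l\in(\lieb^{[1]})^{[t]}$ for all $t$). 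The standard grading makes this argument shorter than the general case, since the extra conjugations used in \cite{S7} to homogenize a $\liet_r$-graded subalgebra are then unnecessary.
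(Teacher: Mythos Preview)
Your argument is correct, but it is not the route taken in the paper. You treat the standard grading hypothesis as inert for the conjugacy assertion itself: once $\lieb$ is torus graded and maximal completely solvable, Theorem \ref{main thm of conjugation class} already hands you $g\in G$ with $g\cdot\lieb=\lieb_r$, and the grading hypothesis is only used to check that each $\lieb_r$ sits inside the class being classified. The paper instead actually uses the standard grading of $\lieb$: from Proposition \ref{conj class of complete Borel} it writes $\lieb=\sigma\cdot g\cdot\lieb_r$ with $\sigma\in U$, $g\in G_0$, then applies the associated-graded functor and the graded assumption on $(G,G_0,U)$ to obtain $\lieb=\mathsf{gr}(\lieb)=\mathsf{gr}(\sigma\cdot g\cdot\lieb_r)=g\cdot\lieb_r$. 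Thus the paper extracts the sharper conclusion that $\lieb$ is $G_0$-conjugate (not merely $G$-conjugate) to $\lieb_r$, at the cost of invoking Lemma \ref{filtration}/\ref{gdim1}-type reasoning; your approach is shorter and entirely adequate for the theorem as stated, but forfeits this refinement.
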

\begin{proof}
Thanks to proposition \ref{conj class of complete Borel}, there is $ \sigma\in U, g\in G_0 $ such that $ \lieb=\sigma\cdot g\cdot \lieb_r. $ Now, apply the gradation functor on both side, $ \lieb=\mathsf{gr}(\lieb)= \mathsf{gr}(\sigma\cdot g\cdot \lieb_r)=g\cdot \lieb_r. $
\end{proof}

\section{(Graded) dimensions}
Recall that $ \mathsf{gdim}(L) $ is the graded dimension for a filtered algebra $ L $ (definition \ref{gdim}). We have proved that $ \mathsf{gdim}(L) $ is $ G $-invariant if $ L $ and $ G $ satisfies the graded assumption (definition \ref{grading assumption}).

\begin{lemma}\label{com Borels are filtered}
	All torus graded and maximal completely solvable subalgebras of $ W(n) $ are filtered subalgebras.
\end{lemma}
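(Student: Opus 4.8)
The plan is to deduce this as an immediate corollary of the conjugacy classification (Proposition~\ref{conj class of complete Borel}) and the general filtration statement (Lemma~\ref{filtration}). First I would fix a torus graded and maximal completely solvable subalgebra $\lieb$ of $W(n)$ and put $r := \on{r}(\lieb)$. By Proposition~\ref{conj class of complete Borel} there is some $g \in G$ with $\lieb = g\cdot\lieb_r$, so it suffices to show that every conjugate of $\lieb_r$ is a filtered subalgebra.

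For this I would first record that each $\lieb_r$ ($r = 0,\cdots,n$) introduced in \S\ref{def of standard comp Borel} is a graded subalgebra of $W(n)$ with respect to the \emph{standard} $\fz$-grading: every spanning vector $x^{\co{a}(i)}\p_i$ appearing in $\lieb_0$, in $C_n$, or in $Q_q \oplus C_q$ is homogeneous of standard degree $\abs{\co{a}(i)} - 1$, whence $\lieb_r = \bigoplus_i\big(\lieb_r \cap W(n)_i\big)$. (This is exactly the fact already recorded just before Theorem~\ref{homo}.) Since $(W(n),(W(n)_i)_{i\in\fz},G,G_0,U)$ satisfies the graded assumption of Definition~\ref{grading assumption}, Lemma~\ref{filtration} then applies verbatim with $\liel = W(n)$ and with $\lieb_r$ playing the role of $\lieb$ there.

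Concretely, Lemma~\ref{filtration} equips $C := g\cdot\lieb_r = \lieb$ with the descending filtration $C_{(n)} := g\cdot(\lieb_r)_{\geq n}$, where $(\lieb_r)_{\geq n} := \lieb_r \cap W(n)_{\geq n}$, and this filtration is compatible with the Lie bracket; hence $\lieb$ is a filtered subalgebra. Moreover, since $W(n)$, its standard filtration, and every $\lieb_r$ are restricted, and every element of $G$ is a restricted automorphism, the last clause of Lemma~\ref{filtration} shows that the resulting filtration on $\lieb$ is restricted.

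I do not expect a genuine obstacle here: the only things to check are that the explicit subalgebras $\lieb_r$ are honestly standard-graded subspaces (clear from their spanning sets) and that the graded assumption holds for $W(n)$ (noted in the remark following the lemma on automorphisms of $W(n)$). The substantive content has already been absorbed into the proof of Proposition~\ref{conj class of complete Borel}; this lemma merely repackages it in the form needed for the graded-dimension computations that follow.
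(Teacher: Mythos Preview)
Your argument is correct and follows exactly the same route as the paper: note that $(W(n),(W(n)_i),G,G_0,U)$ satisfies the graded assumption, that each $\lieb_r$ is standard-graded, and invoke Lemma~\ref{filtration} together with the conjugacy result of Proposition~\ref{conj class of complete Borel}. The paper's own proof is simply a one-sentence version of what you wrote, so there is nothing to add.
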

\begin{proof}
	Note that $( W(n), (W(n)_i), G,G_0,U) $ satisfies the graded assumptions in Lemma \ref{filtration}, and all $ \lieb_r,\ r=0,\cdots,n, $ are graded. Claim holds.
\end{proof}

\begin{proposition}
	Suppose $ \lieb $ is a torus graded and maximal completely solvable subalgebra of $ W(n) $ with $ \on{r}( \lieb)=r, $ then
	$$ \mathsf{gdim}( \lieb)= n(Q^{n-r}-1)Q^rt^{-1} +\frac{1-Q^r}{1-Q}t^{-1} - \frac{(n-r)(n+r+1)}{2}(Q^r-1) $$
	$$+r-\frac{(n-r)(n-r-1)}{2}, $$
	where $ Q=\sum_{i=0}^{p-1} t^i. $
\end{proposition}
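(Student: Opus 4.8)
The plan is to reduce the assertion to the single representative $\lieb_{r}$ and then to compute $\mathsf{gdim}(\lieb_{r})$ by directly enumerating the homogeneous components of $\lieb_{r}$. For the \emph{reduction}: by Proposition~\ref{conj class of complete Borel}, $\lieb$ is $G$-conjugate to $\lieb_{r}$; write $\lieb=u\cdot g_{0}\cdot\lieb_{r}$ with $u\in U$, $g_{0}\in G_{0}$. By Lemma~\ref{com Borels are filtered}, $\lieb$ carries a filtration, so $\mathsf{gdim}(\lieb)$ is defined, and exactly as in the proof of Lemma~\ref{gdim1} one has $\mathsf{gr}(\lieb)=\mathsf{gr}(u\cdot g_{0}\cdot\lieb_{r})=g_{0}\cdot\lieb_{r}$. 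Since $g_{0}\in G_{0}$ preserves the standard $\fz$-grading of $W(n)$, it preserves the dimension of every homogeneous component, so $\mathsf{gdim}(\lieb)=\mathsf{gdim}(\lieb_{r})=\sum_{s}\dim\bigl(\lieb_{r}\cap W(n)_{s}\bigr)\,t^{s}$. It therefore suffices to prove the formula for $\lieb=\lieb_{r}$.

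For the \emph{count}, I would use the explicit description of $\lieb_{r}$ from \S\ref{def of standard comp Borel}, namely $\lieb_{r}=\lieb_{0}(x_{1},\dots,x_{n-r})\oplus Q_{r}\oplus\lieb_{r}(x_{n-r+1},\dots,x_{n})=\liet_{0}\oplus C_{r}$, with $C_{r}$ spanned over the index sets $\Lambda_{1}\cup\Lambda_{2}$. Every basis vector of $\lieb_{r}$ has the shape $x^{\un a}\p_{i}$ and sits in degree $|\un a|-1$, so it is natural to group the enumeration according to the index $i$, splitting into the $u$-directions $1\le i\le n-r$ (governed by $\Lambda_{1}$: an upper-triangular-type condition on the $x_{1},\dots,x_{n-r}$-part of $\un a$, the $x_{n-r+1},\dots,x_{n}$-part being free) and the $w$-directions $n-r+1\le i\le n$ (governed by $\Lambda_{2}$: $x^{\un a}\p_{i}$ is admitted if the $u$-part of $\un a$ is nonzero, or it vanishes and the $w$-part is supported on the variables preceding $x_{i}$). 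Setting $Q=\sum_{a=0}^{p-1}t^{a}$, the tally of admissible monomials attached to each $\p_{i}$ is assembled from three elementary building blocks: a free exponent vector in $m$ variables contributes a factor $Q^{m}$; an exponent vector pinned to an initial block of $j-1$ variables contributes $Q^{j-1}$, so that summing over the $w$-directions produces the partial geometric sum $\sum_{j=1}^{r}Q^{j-1}=\tfrac{1-Q^{r}}{1-Q}$; and the upper-triangular-type restrictions (those in $\Lambda_{1}$ and those built into $\lieb_{0}(x_{1},\dots,x_{n-r})$) contribute, after summation over $i$, arithmetic series that collapse into triangular-number terms such as $\tfrac{(n-r)(n-r-1)}{2}$ and $\tfrac{(n-r)(n+r+1)}{2}$.

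The \emph{assembly} then goes as follows. Putting the three summands together, the free $w$-exponents along the $u$-directions and the free $u$-exponents along the $w$-directions add up to the leading term $n(Q^{n-r}-1)Q^{r}t^{-1}$; the pinned $w$-exponents of $\lieb_{r}(x_{n-r+1},\dots,x_{n})$ contribute $\tfrac{1-Q^{r}}{1-Q}t^{-1}$; and the triangular-number corrections together with $\dim\liet_{0}$ account for the remaining polynomial $-\tfrac{(n-r)(n+r+1)}{2}(Q^{r}-1)+r-\tfrac{(n-r)(n-r-1)}{2}$. The overall factor $t^{-1}$ in the first three terms is simply the degree shift coming from $\deg(x^{\un a}\p_{i})=|\un a|-1$. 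Carrying out the geometric and arithmetic summations and simplifying yields the asserted closed form.

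The step I expect to be genuinely delicate is this last bookkeeping: reconciling the two descriptions $\lieb_{0}(\cdots)\oplus Q_{r}\oplus\lieb_{r}(\cdots)$ and $\liet_{0}\oplus C_{r}$ so that each monomial is counted with the correct multiplicity in each graded degree, and then verifying that all the triangular-number corrections recombine, after the $t^{-1}$-shift, into precisely the polynomial displayed in the statement. Conceptually nothing beyond the reduction of the first step is needed; it is a finite but intricate generating-function calculation.
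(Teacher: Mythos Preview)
Your proposal is correct and follows essentially the same route as the paper: reduce to the representative $\lieb_{r}$ via Proposition~\ref{conj class of complete Borel} and the $G$-invariance of $\mathsf{gdim}$, then compute $\mathsf{gdim}(\lieb_{r})$ by slicing along the $\p_{i}$-directions and summing the resulting geometric and arithmetic series. The only cosmetic differences are that the paper places the conjugacy reduction at the end rather than the beginning, and it works exclusively with the three-part decomposition $\lieb_{0}(x_{1},\dots,x_{n-r})\oplus Q_{r}\oplus\lieb_{r}(x_{n-r+1},\dots,x_{n})$ (computing $\mathsf{gdim}(R_{i})$ for $R_{i}=Q_{r}\cap A(n)\p_{i}$ separately in the two ranges of $i$) rather than juggling both descriptions; this sidesteps the reconciliation step you flag as delicate.
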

\begin{proof}
	
	One can check that $
	\mathsf{gdim}(A(n))=Q^n\ \text{and }
	\mathsf{gdim}(A(n)\p_i)= Q^nt^{-1},	$ where
	$
	Q:=\mathsf{gdim}(A(1))=\sum_{i=0}^{p-1} t^i.
	$
Then $ \mathsf{gdim}(\lieb_0)= nQ^nt^{-1} -nt^{-1}-\frac{n(n-1)}{2} . $

For $ \lieb_n $ case,	
	recall that $ \lieb_n=\liet_0+ \langle X^{\un{a}}\p_i \mid \un{a}=(a_1,\cdots,a_{i-1},0,\cdots, 0) \rangle.$ 
	Therefore,
		$$ \mathsf{gdim}(\lieb_n)   = \sum_{i=1}^{n}\mathsf{gdim}(\lieb_n\cap A(n)\p_i) 
		= \frac{1-Q^n}{1-Q}t^{-1}+n. $$
	
	
	Now, for arbitrary $ q\neq 0, n, $
	$ \lieb_q= \lieb_0(x_1,\cdots, x_{n-q}) \ds Q_q \ds  \lieb_q (x_{n-q+1}, \cdots, x_ n), $
	where $ Q_q=\langle u^{\co {a}(i)}w^{\co {b}(i)}\p _i \mid (\co {a}(i), \co {b}(i), i)\in \Gamma_1 \cup \Gamma_2\rangle $ as \ref{def of standard comp Borel}.

	Set
	$ R_i:=Q_q\cap A(n)\p_i,\ i=1,\cdots,n. $ We have the followings.
	
	If $ 1\leq i \leq n-q, $
		$ \mathsf{gdim}(R_i) = (Q^{n-q}-1)(Q^q-1)t^{-1}-(n-i+1)(Q^q-1); $
	
	If $ n-q+1\leq i \leq n, $
		$ \mathsf{gdim}(R_i) = (Q^{n-q}-1)Q^qt^{-1}. $
	
	Moreover,	
	
	\noindent\begin{tabular}{rcl}
		$ \mathsf{gdim}(\lieb_q) $ 
		& = & $ n(Q^{n-q}-1)Q^qt^{-1} +\frac{1-Q^q}{1-Q}t^{-1} - \frac{(n-q)(n+q+1)}{2}(Q^q-1) $ \\
		& & $+q-(n-q)(n-q-1)/2. $
	\end{tabular}
	
	
	Now, suppose $ \lieb $ is an arbitrary torus graded and maximal completely solvable subalgebra of $ W(n) $ with $ \on{r}(\lieb)=r. $ Then there exits $ \Phi\in G $ such that $ \Phi\cdot \lieb= \lieb_r. $ Moreover, $ \mathsf{gdim}(\lieb)=\mathsf{gdim}(\Phi\cdot\lieb)=\mathsf{gdim}( \lieb_r). $
\end{proof}

\begin{remark}
	By using the formula $ Q(0)=1 $ and $ \frac{dQ}{dt}|_{t=0}=1, $ one can get
	$$
	\mathsf{dim}(\lieb_{r,-1})=r,
	$$
	$$
	\mathsf{dim}(\lieb_{r,0})=r(r+1)/2,
	$$
	which match the fact that
	$$ \lieb_r=\sum_{i=n-r+1}^{n}\fk \p_i+ \sum_{i\leq j}\fk x_i\p_j +\text{ higher degree terms}. $$
\end{remark}

Note that $ Q(1)=p. $ We have the following corollary by taking $ t=1 $.
\begin{corollary}\label{dim of b_q}
	Suppose $ \lieb $ is a torus graded and maximal completely solvable subalgebra with $ \on{r}( \lieb)=r, $ then
	$$  \mathsf{dim}(\lieb)=n(p^{n-r}-1)p^r +\frac{1-p^r}{1-p}- \frac{(n-r)(n+r+1)}{2}(p^r-1) +r-\frac{(n-r)(n-r-1)}{2}.  $$
\end{corollary}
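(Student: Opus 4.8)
The plan is to obtain the dimension formula by specializing the graded-dimension generating function of the preceding proposition at $t=1$. Recall that by Lemma \ref{com Borels are filtered} every torus graded and maximal completely solvable subalgebra $\lieb$ is a filtered subalgebra, so by Definition \ref{gdim} we have $\mathsf{gdim}(\lieb)=\mathsf{gdim}(\mathsf{gr}(\lieb))=\sum_i(\mathsf{dim}\,\mathsf{gr}(\lieb)_i)\,t^i$. Since the filtration on $\lieb$ is finite and passing to the associated graded preserves total dimension, evaluating this generating function at $t=1$ collapses each factor $t^i$ to $1$ and yields $\sum_i\mathsf{dim}\,\mathsf{gr}(\lieb)_i=\mathsf{dim}\,\mathsf{gr}(\lieb)=\mathsf{dim}(\lieb)$. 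Thus the left-hand side of the proposition, once specialized at $t=1$, is precisely the quantity we wish to compute.

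It then remains only to track what the closed form of the proposition becomes under this substitution. The sole occurrences of $t$ are the factors $t^{-1}$, which become $1$, and the variable $Q$, for which $Q(1)=\sum_{i=0}^{p-1}1=p$. Substituting $t^{-1}=1$ and $Q=p$ into
$$\mathsf{gdim}(\lieb)= n(Q^{n-r}-1)Q^rt^{-1} +\frac{1-Q^r}{1-Q}t^{-1} - \frac{(n-r)(n+r+1)}{2}(Q^r-1) +r-\frac{(n-r)(n-r-1)}{2}$$
reproduces term-by-term the asserted expression for $\mathsf{dim}(\lieb)$.

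There is no genuine obstacle here, as the whole content is carried by the proposition and the corollary is a routine evaluation. The single point that deserves explicit care is the justification that specialization at $t=1$ recovers the \emph{total} dimension rather than a merely formal value: this is exactly where finite-dimensionality of $W(n)$ enters, guaranteeing that only finitely many graded pieces $\mathsf{gr}(\lieb)_i$ are nonzero and that $\mathsf{dim}\,\mathsf{gr}(\lieb)=\mathsf{dim}(\lieb)$, so that the generating function is a genuine polynomial whose evaluation at $t=1$ sums the dimensions of the graded pieces.
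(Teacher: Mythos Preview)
Your proof is correct and follows exactly the paper's approach: the paper simply notes that $Q(1)=p$ and obtains the corollary by taking $t=1$ in the preceding proposition. Your additional justification that evaluation at $t=1$ recovers the total dimension is a welcome elaboration of what the paper leaves implicit.
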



It is easy to verify that all homogeneous Borel subalgebras defined  of $ W(n) $ in \cite{S7} are filtered and hence one can compute their (graded) dimensions as well.

\begin{proposition}
	Suppose $ B $ is a homogeneous Borel subalgebra with $ \on{r}( B)=q, $ then
	$$ \mathsf{gdim}( B)= nQ^nt^{-1}-nQ^qt^{-1}-\frac{(n-q)(n-q-1)}{2} Q^q +(1+t^{-1}) \frac{1-Q^q}{1-Q}, $$
	where $ Q=\sum_{i=0}^{p-1} t^i. $ And
	\[ \mathsf{dim}( B)= np^nt^{-1}-np^q-\frac{(n-q)(n-q-1)}{2} p^q + \frac{2(1-p^q)}{1-p}. \]
\end{proposition}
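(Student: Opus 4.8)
The plan is to mimic exactly the computation carried out for the completely solvable subalgebras $\lieb_r$ in the previous proposition, since the structure of a homogeneous Borel $B$ with $\on{r}(B)=q$ is entirely analogous. First I would invoke the classification from \cite{S7}: since $B$ is homogeneous Borel with $\on{r}(B)=q$, it is $G$-conjugate to the standard representative $B_q$, and by Lemma~\ref{gdim1} the graded dimension is $G$-invariant, so it suffices to compute $\mathsf{gdim}(B_q)$. (One also needs the analogue of Lemma~\ref{com Borels are filtered}, namely that homogeneous Borel subalgebras are filtered so that $\mathsf{gdim}$ makes sense; this is noted in the sentence preceding the proposition and follows from Lemma~\ref{filtration} since each $B_q$ is graded.)

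Next I would recall the explicit description of $B_q$ from \cite{S7}. Writing $W(n)=\bigoplus_i A(n)\p_i$, the subalgebra $B_q$ decomposes, just as $\lieb_q$ does, into a ``classical part'' in the first $n-q$ variables behaving like a $\lieb_0$-type piece, a middle block, and a ``torus-type'' part in the last $q$ variables behaving like a $B_q$-type piece. Concretely I would compute $\mathsf{gdim}(B_q\cap A(n)\p_i)$ variable-index by variable-index: for $1\le i\le n-q$ one gets a contribution of the form $(Q^{n-q}- \cdots)Q^q t^{-1}$ minus a correction of degree-$0$ and degree-$(-1)$ terms coming from the upper-triangular constraint in the first $n-q$ slots, and for $n-q+1\le i\le n$ one gets the ``homogeneous torus'' contribution, a geometric-series type term $\frac{1-Q^{\,\cdot}}{1-Q}$ shifted by $t^{-1}$. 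Summing over $i=1,\dots,n$ and collecting the $t^{-1}$, $Q^q t^{-1}$, $Q^q$, and $\frac{1-Q^q}{1-Q}(1+t^{-1})$ pieces yields the stated closed form. The second displayed formula then follows by the specialization $t=1$, using $Q(1)=p$, exactly as in Corollary~\ref{dim of b_q}; here the term $np^nt^{-1}$ in the $\mathsf{dim}$ formula is literally $\mathsf{gdim}$ evaluated at $t=1$ with the $t^{-1}$ left formal (i.e.\ it is a typographical artifact — one should read $t=1$ throughout, giving $np^n - np^q - \tfrac{(n-q)(n-q-1)}{2}p^q + \tfrac{2(1-p^q)}{1-p}$).

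The main obstacle I anticipate is purely bookkeeping: getting the degree-$0$ and degree-$(-1)$ correction terms right. The subtlety is that in the first $n-q$ variables the defining conditions of $B_q$ (an upper-triangular / homogeneous-Borel condition on the low-degree part) remove precisely the elements $x^{\co a(i)}\p_i$ with $\abs{\co a(i)}\le 1$ that fail the triangularity constraint, and one must count these removed elements as polynomials in $Q$ correctly — this is where the $\frac{(n-q)(n-q-1)}{2}Q^q$ and the $\frac{1-Q^q}{1-Q}$ terms come from, and sign errors are easy. I would cross-check the final formula against two sanity cases: $q=0$, where $B_0=\lieb_0$ and the formula must collapse to $\mathsf{gdim}(\lieb_0)=nQ^nt^{-1}-nt^{-1}-\frac{n(n-1)}{2}$; and $q=n$, where $B_n$ is a homogeneous version of $\lieb_n$ and the formula must reduce to $\frac{1-Q^n}{1-Q}(1+t^{-1})$, matching the $\mathsf{gdim}(\lieb_n)$ computation up to the extra $t^{-1}$-shifted copy that distinguishes the homogeneous case. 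Passing both checks, together with the invariance argument, completes the proof.
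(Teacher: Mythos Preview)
Your proposal is correct and matches the paper's approach: the paper gives no explicit proof of this proposition, merely remarking in the sentence preceding it that homogeneous Borel subalgebras are filtered and that one can compute their (graded) dimensions ``as well'' --- i.e.\ by the same method as the preceding proposition for the $\lieb_r$. Your plan (reduce to the standard representative $B_q$ via the classification in \cite{S7} and $G$-invariance of $\mathsf{gdim}$, then compute $\mathsf{gdim}(B_q\cap A(n)\p_i)$ index by index and specialize $t=1$) is exactly that method, and your sanity check at $q=0$ is the right consistency test since $B_0=\lieb_0$.
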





\end{document}